\documentclass[11pt]{article}
\usepackage[T1]{fontenc}
\usepackage{lmodern}
\usepackage[margin=1in]{geometry}

\usepackage{amsmath,amssymb,amsthm,mathtools,mathrsfs}
\usepackage{microtype}
\usepackage{enumitem}
\usepackage{booktabs,graphicx,array}
\usepackage{hyperref}
\usepackage{xcolor}
\usepackage{multirow}
\usepackage{subfigure}

\numberwithin{equation}{section}
\newtheorem{theorem}{Theorem}[section]
\newtheorem{lemma}[theorem]{Lemma}
\newtheorem{proposition}[theorem]{Proposition}

\newtheorem{assumption}[theorem]{Assumption}
\theoremstyle{remark}
\newtheorem{remark}[theorem]{Remark}

\newtheorem{problem}{Problem}

\newcommand{\norm}[1]{\left\lVert #1\right\rVert}

\title{Inverse heat source problem from terminal data on a set of low Hausdorff dimension}

\author{
Kai Yu \thanks{School of Mathematics and Statistics, Ningbo University, Ningbo 315211, China. yukaimailbox@163.com} \and Zhiyuan Li \thanks{School of Mathematics and Statistics, Ningbo University, Ningbo 315211, China.  lizhiyuan@nbu.edu.cn} \and Zhidong Zhang \thanks{School of Mathematics (Zhuhai), Sun Yat-sen University, Zhuhai 519082, China. Guangdong Province Key Laboratory of Computational Science, Sun Yat-sen University, Guangzhou 510000, China. zhangzhidong@mail.sysu.edu.cn}
}
\date{}

\begin{document}
\maketitle

\begin{abstract}

We study the recovery of a spatial source in the heat equation from terminal observations on a compact interior set $E$ with positive $\beta$ dimensional Hausdorff content for some $\beta>d-1$. The prescribed temporal forcing vanishes on a nontrivial interval immediately preceding the observation time. Under a spectral Sobolev prior and a quantitative noncancellation condition on the terminal spectral multipliers, we establish conditional logarithmic stability. For scattered observations on such sets, we derive a sampling inequality under a quasi uniformity assumption. Under explicit sampling and independent noise assumptions, we prove a logarithmic convergence rate in probability for the corresponding Tikhonov reconstructions with a data driven parameter choice. Two numerical experiments using a finite candidate approximation of a fixed Hausdorff set assess discrete spectral sampling stability and illustrate the effects of the waiting interval and measurement noise on reconstruction. 

\medskip

\noindent\textbf{Keywords.} inverse source problem; terminal observation; Hausdorff content; scattered point measurement; Tikhonov regularization.

\medskip
\noindent\textbf{MSC 2020.} 35R30, 35K05, 65J20.

\end{abstract}

\section{Introduction}

\subsection{Model and motivation}

Inverse source problems for diffusion equations seek to recover an unknown
forcing term from indirect measurements of the corresponding state. They are
ill posed because diffusion strongly damps high spatial frequencies. The
problem becomes more challenging when observations are restricted to a
geometrically thin subset of the domain. We ask whether a spatial source can be recovered, and with
what stability, from a terminal measurement on a compact set with positive
$\beta$-dimensional Hausdorff content for some $\beta\in(d-1,d]$. The set may have empty interior and zero $d$-dimensional Lebesgue measure.

More precisely, let $\Omega\subset\mathbb R^d$, $d\ge1$, be a bounded connected
$C^{1,1}$ domain. We consider the heat
equation
\begin{equation}
\label{eq:model}
\begin{cases}
\partial_tu-\Delta u=\rho(t)f(x),
&(x,t)\in\Omega\times(0,T),\\
u(x,t)=0,
&(x,t)\in\partial\Omega\times(0,T),\\
u(x,0)=0,
&x\in\Omega.
\end{cases}
\end{equation}
The source term $\rho(t)f(x)$ has a separable structure, where
$\rho(t)$ describes the temporal variation of the source intensity and $f(x)$ characterizes its spatial distribution.

The Dirichlet Laplacian $A:=-\Delta$ is positive and self-adjoint, and the
heat semigroup $e^{-tA}$ gives a spectral representation of the terminal
source-to-state map, which is also useful for the numerical approximation
below \cite{Davies89,Pazy83}. A key assumption is that the temporal forcing
vanishes on a nontrivial interval immediately preceding the observation time:
\begin{equation}
\label{eq:switch-off}
\rho(t)=0
\quad\text{for a.e. }t\in(T_0,T),\qquad \tau_0:=T-T_0>0.
\end{equation}
The temporal forcing vanishes throughout $(T_0,T)$, so that
\[
u(T)=e^{-\tau_0A}u(T_0).
\]
In each eigenmode, this evolution contributes the factor
$e^{-\tau_0\lambda_k}$. The resulting additional smoothing is central to
the analysis: it must offset the cost of propagating information from the
thin observation set to the whole domain.

The observation set may be much smaller than an open subset of $\Omega$.
For $0<\beta\le d$ and $E\subset\mathbb R^d$, its $\beta$-dimensional
Hausdorff content is
\begin{equation}
\label{eq:content}
\mathcal H_\infty^\beta(E)
:=\inf\Bigl\{\sum_j r_j^\beta:
E\subset\bigcup_jB(x_j,r_j)\Bigr\}.
\end{equation}
We assume that $E\Subset\Omega$ is compact and
\begin{equation}
\label{eq:haus}
\mathcal H_\infty^\beta(E)>0,
\qquad d-1<\beta\le d.
\end{equation}
The positive waiting time gives a continuous representative of the terminal
state on $E$ for every $f\in L^2(\Omega)$, so the observation
$u(\cdot,T)|_E$ is well defined. Therefore, we formulate the inverse source problem as follows.

\begin{problem}
Let $\rho$ be known and satisfy \eqref{eq:switch-off}, and let
$E\Subset\Omega$ be a compact set satisfying \eqref{eq:haus}.
Given the terminal observation \(u(\cdot,T)|_E, \)
where $u$ solves \eqref{eq:model}, determine the unknown spatial
source $f(x)$.
\end{problem}

This problem is motivated by source identification in atmospheric
science, where pollutant release characteristics are inferred from
concentration measurements at monitoring sites
\cite{Rao07,WangAtmospheric21}. In practice, the number and spatial
distribution of monitoring sites may be limited by measurement costs
and geographical constraints, making observations over an entire
open region unavailable. This raises a natural question: can a
spatial source be stably recovered from terminal data restricted
to a spatially thin set, potentially with empty interior and zero
$d$-dimensional Lebesgue measure? The theoretical results developed
in this work may provide a mathematical foundation for addressing
source identification problems under such limited observation
conditions.

\subsection{Relevant literature and the underlying difficulty}

Inverse source problems for classical parabolic equations have been studied extensively, particularly with regard to uniqueness and conditional stability under suitable a priori assumptions. Carleman estimates are an important tool for deriving stability estimates; see, for example, \cite{Yamamoto93,ChoulliYamamoto04,Yamamoto09,ImanuvilovYamamoto98}. Other approaches include the analysis of data compatibility and conditional stability, as well as operator theoretic formulations for abstract evolution equations \cite{Li06,PrilepkoTikhonov95,JaradatEtAl17}. Earlier studies also addressed inverse source recovery from overspecified boundary data and final time observations \cite{Cannon68,Isakov91}, while variational methods based on adjoint equations have been used to establish conditional stability for diffusion inverse source problems \cite{SunLiuLi17}. Related questions of uniqueness and numerical reconstruction have also been investigated for coupled diffusion models arising in fluorescence optical tomography \cite{SunZhang22}. Another closely related problem is the backward heat problem, which seeks to recover an initial temperature from observations at later times and is severely ill posed because heat evolution exponentially damps high spatial frequencies; see \cite{Liu02,FuXiongQian07,LiYamamotoZou09,CasagrandeDelSantoPrizzi22}.

The geometry of observation sets is also relevant to
quantitative observability. While classical observability estimates are often formulated for
observations on interior open sets or prescribed portions of the
boundary, quantitative estimates for the propagation of smallness
and spectral inequalities can also be established for certain sets
with Hausdorff dimension strictly greater than $d-1$
\cite{BurqMoyano23,Green25}. Such techniques have been applied to
inverse heat-source problems \cite{LiLiZhou26} and to the recovery
of initial data for fractional heat equations from observations
distributed in time on thin sets \cite{YuLi26}.
These developments provide a framework for studying inverse problems
with observations on sets of small Hausdorff dimension, although the
precise stability estimates depend on the equation and the observation
configuration.

Numerical reconstruction of exponentially ill-posed inverse problems commonly employs spectral filtering and Tikhonov regularization \cite{EnglHankeNeubauer96,Hansen10}. For the inverse source problem for the heat equation, Chen, Zhang, and Zou studied quasi-uniform scattered measurements and stochastic Tikhonov regularization \cite{ChenZhangZou22}. Related work has addressed data-driven model reduction for parabolic inverse source problems \cite{WangZhangZhang231} and stochastic convergence of regularized solutions for backward heat conduction problems \cite{WangZhangZhang23}. Probabilistic error estimates for reconstruction from discrete noisy data have also been obtained for nonlinear inverse source problems in coupled diffusion equations, together with stability analysis based on terminal observations \cite{SunZhangZhang25}. From the perspective of discrete observation, finite-dimensional sampling inequalities provide a means of controlling functions from point measurements \cite{LimonovaMalykhinTemlyakov24,KriegPozharskaUllrich26}, while spectral sampling and Fekete-point constructions offer related approaches to stable discretization \cite{OrtegaPridhnani12,SommarivaVianello09}. Data-driven parameter choice is another aspect of regularization, and fixed-point updates have been studied by Cen, Li, and Zhang \cite{CenLiZhang25}. These works provide relevant analytical and computational tools, but establishing stability and convergence for the recovery of a spatial source from terminal heat observations on thin sets requires addressing the interaction between discrete sampling and the geometry of the observation set.

Against this background, we study the recovery of a spatial source from
terminal data on a compact set $E\Subset\Omega$ satisfying
$\mathcal H_\infty^\beta(E)>0$ for some $\beta\in(d-1,d]$.
The analysis requires a quantitative noncancellation condition on the
terminal multipliers, since a sign changing temporal profile $\rho$ may
cause cancellation at individual eigenfrequencies. The waiting interval
during which the forcing vanishes contributes the exponential factor
$e^{-\tau_0\Lambda}$, which dominates the spectral observation cost
$e^{C\sqrt{\Lambda}}$ at high frequencies. This comparison is central to
the conditional stability and discrete reconstruction results established
below.

\subsection{Main results and contributions}

We now formulate precisely the spectral noncancellation hypothesis used in
the stability analysis.

\begin{assumption}
\label{ass:rho}
Let $\rho\in L^\infty(0,T)$ satisfy \eqref{eq:switch-off}, and let
$0<\lambda_1\le\lambda_2\le\cdots$ be the Dirichlet eigenvalues of $A$,
counted with multiplicity. For each $k\ge1$, define the terminal multiplier
\[
b_k:=\int_0^{T_0}\rho(t)e^{-(T-t)\lambda_k}\,dt.
\]
There exist constants $a\ge1$ and $c_b>0$ such that
\begin{equation*}
|b_k|\ge c_b\lambda_k^{-a}e^{-\tau_0\lambda_k}
\qquad\text{for every }k\ge1.
\end{equation*}
\end{assumption}

The elementary upper estimate
\[
|b_k|
\le \|\rho\|_{L^\infty(0,T_0)}
e^{-\tau_0\lambda_k}
\int_0^{T_0}e^{-(T_0-t)\lambda_k}\,dt
\le \|\rho\|_{L^\infty(0,T_0)}
\lambda_k^{-1}e^{-\tau_0\lambda_k}
\]
follows directly from $\rho\in L^\infty(0,T_0)$. Combining this estimate
with Assumption~\ref{ass:rho}, we obtain the two-sided multiplier bound
\begin{equation}
\label{eq:multiplier-two-sided}
c_b\lambda_k^{-a}e^{-\tau_0\lambda_k}
\le |b_k|
\le \|\rho\|_{L^\infty(0,T_0)}
\lambda_k^{-1}e^{-\tau_0\lambda_k},
\qquad k\ge1.
\end{equation}
Assumption~\ref{ass:rho} excludes complete or super-polynomial cancellation
of the temporal forcing along the Dirichlet spectrum, relative to the
exponential decay generated by the waiting time. Remark~\ref{rem:profile-conditions}
below gives concrete sufficient conditions on $\rho$ ensuring this property.

\begin{remark}
\label{rem:profile-conditions}
Let $a\ge1$ and let $\rho\in L^\infty(0,T)$ satisfy
\eqref{eq:switch-off}. Each of the following conditions implies
Assumption~\ref{ass:rho} with this exponent $a$.

\begin{enumerate}[label=\textup{(\roman*)}]
\item \textit{Nonnegative temporal profiles.}
There exist $\eta\in(0,T_0]$ and $c_*>0$ such that
$\rho\ge0$ a.e. on $(0,T_0)$ and
$\rho(t)\ge c_*(T_0-t)^{a-1}$ a.e. on $(T_0-\eta,T_0)$.

\item \textit{Temporal profiles allowing sign changes.}
There exist $\eta\in(0,T_0]$, $\alpha>0$, and $\kappa\ne0$
such that $\rho(T_0-r)=\kappa r^{a-1}+O(r^{a-1+\alpha})$
for a.e. $r\in(0,\eta)$, and $b_k\ne0$ for every $k\ge1$.
\end{enumerate}

To verify these claims, write $\mu=\lambda_k$. In case \textup{(i)}, nonnegativity gives
\[
\mu^a e^{\tau_0\mu}b_k
\ge c_*\mu^a\int_0^\eta r^{a-1}e^{-\mu r}\,dr
=c_*\int_0^{\eta\mu}v^{a-1}e^{-v}\,dv
\ge c_*\int_0^{\eta\lambda_1}v^{a-1}e^{-v}\,dv>0.
\]
For \textup{(ii)}, split the integral defining $e^{\tau_0\mu}b_k$
at $r=\eta$. Boundedness of $\rho$ and the endpoint expansion yield
\[
\mu^a e^{\tau_0\mu}b_k
=\kappa\int_0^{\eta\mu}v^{a-1}e^{-v}\,dv
+O(\mu^{-\alpha})+O(\mu^{a-1}e^{-\eta\mu})
\longrightarrow\kappa\Gamma(a)\ne0
\qquad(\mu\to\infty).
\]
Thus the required bound holds for all sufficiently large $k$.
It holds for the remaining finitely many $k$ after reducing the positive
constant, since each corresponding $b_k$ is nonzero.
\end{remark}

\begin{theorem}[Conditional logarithmic stability]
\label{thm:log}
Assume \eqref{eq:haus} and
Assumption~\ref{ass:rho}, let
$\sigma>0$, and let $f_1,f_2\in D(A^\sigma)$ satisfy
\[
\|A^\sigma(f_1-f_2)\|_{L^2(\Omega)}\le M
\]
for some $M>0$. Set
\[
\delta_T:=\sup_{x\in E}|u_1(x,T)-u_2(x,T)|.
\]
Then
\begin{equation}
\label{eq:log-main}
\|f_1-f_2\|_{L^2(\Omega)}
\le
CM\left[\log\left(e+\frac{M}{\delta_T}\right)\right]^{-\sigma},
\end{equation}
where $C>0$ may depend on $\Omega$, $E$, $\beta$, $\sigma$,
$T_0$, $\tau_0$, $a$, $c_b$, and $\|\rho\|_{L^\infty(0,T)}$,
but is independent of $f_1$, $f_2$, $M$, and $\delta_T$. The right-hand side is interpreted as zero when
$\delta_T=0$.
\end{theorem}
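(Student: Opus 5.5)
By linearity, set $f:=f_1-f_2$ and $w:=u_1-u_2$. Then $w$ solves \eqref{eq:model} with source $\rho(t)f(x)$, and in the Dirichlet eigenbasis $\{\varphi_k\}$ of $A$ we have
\[
w(\cdot,T)=\sum_{k\ge1} b_k f_k\varphi_k,\qquad f_k:=(f,\varphi_k).
\]
The proof splits $f$ at a spectral cutoff $\Lambda\ge\lambda_1$, to be chosen in terms of $M/\delta_T$. The high-frequency part is controlled by the prior:
\[
\|P_{>\Lambda}f\|\le \Lambda^{-\sigma}\|A^\sigma f\|\le M\Lambda^{-\sigma}.
\]
The low-frequency part is handled in three steps.

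\textbf{Step 1: spectral inequality on $E$.} I will invoke the spectral inequality on sets of positive $\beta$-Hausdorff content with $\beta>d-1$, in the form of Burq--Moyano and Green \cite{BurqMoyano23,Green25}: for every $g\in\operatorname{span}\{\varphi_k:\lambda_k\le\Lambda\}$,
\[
\|g\|_{L^2(\Omega)}\le Ce^{C\sqrt\Lambda}\sup_E|g|.
\]
Apply this to $g:=P_{\le\Lambda}w(\cdot,T)$ and write $\sup_E|g|\le \delta_T+\sup_E|P_{>\Lambda}w(\cdot,T)|$. The tail is bounded in sup norm through the embedding $\|h\|_{L^\infty}\le C\|A^{s}h\|$ for $s>d/4$ together with the upper multiplier bound \eqref{eq:multiplier-two-sided}:
\[
\sup_E|P_{>\Lambda}w(T)|\le C\Bigl(\sum_{\lambda_k>\Lambda}\lambda_k^{2s}\lambda_k^{-2}e^{-2\tau_0\lambda_k}|f_k|^2\Bigr)^{1/2}\le Ce^{-\tau_0\Lambda/2}\,\|f\|.
\]
To replace $\|f\|$ by $M$, use $\|f\|\le\lambda_1^{-\sigma}M$. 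The constant absorbs $\sup_{\mu}\mu^{s-1}e^{-\tau_0\mu/2}$.

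\textbf{Step 2: invert the multipliers.} By the lower bound in Assumption~\ref{ass:rho},
\[
\|P_{\le\Lambda}f\|\le c_b^{-1}\Lambda^{a}e^{\tau_0\Lambda}\|g\|,
\]
and combining with Step 1,
\[
\|P_{\le\Lambda}f\|\le C\Lambda^a e^{\tau_0\Lambda+C\sqrt\Lambda}\bigl(\delta_T+Me^{-\tau_0\Lambda/2}\bigr).
\]

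\textbf{Step 3: the main obstacle.} The bound in Step 2 is useless as written: the tail term contributes $M\Lambda^a e^{\tau_0\Lambda/2+C\sqrt\Lambda}$, which grows with $\Lambda$. The waiting-time decay has to be exploited more sharply. The fix I would try first is not to split $w(T)$ crudely. Instead, observe that the tail $P_{>\Lambda}w(T)$ carries the factor $e^{-\tau_0\lambda_k}$ with $\lambda_k>\Lambda'$ for a second, larger cutoff $\Lambda'$. Concretely, take the spectral inequality at level $\Lambda'=\theta\Lambda$ with $\theta>1$ large, apply it to $P_{\le\Lambda'}w(T)$, and only then restrict to modes $\le\Lambda$. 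The tail is then of size $e^{-\tau_0\theta\Lambda/2}M$, while the inversion cost is only $e^{\tau_0\Lambda}$ and the observation cost is $e^{C\sqrt{\theta\Lambda}}$. Choosing $\theta\ge4$ makes the net factor
\[
e^{\tau_0\Lambda-\tau_0\theta\Lambda/2+C\sqrt{\theta\Lambda}}
\]
decay exponentially; the same comparison of $e^{-\tau_0\Lambda}$ with $e^{C\sqrt\Lambda}$ appears in the introduction. This yields
\[
\|P_{\le\Lambda}f\|\le Ce^{C_1\Lambda}\delta_T+CMe^{-c\Lambda}.
\]

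\textbf{Balancing.} Collecting the pieces,
\[
\|f\|\le C\bigl(e^{C_1\Lambda}\delta_T+M\Lambda^{-\sigma}\bigr).
\]
Choose
\[
\Lambda=\frac{1}{2C_1}\log\Bigl(e+\frac{M}{\delta_T}\Bigr),
\]
increased to $\lambda_1$ if necessary, which only changes constants. Then $e^{C_1\Lambda}\delta_T\le M(e+M/\delta_T)^{-1/2}$, and this is dominated by $M\bigl[\log(e+M/\delta_T)\bigr]^{-\sigma}$. Hence \eqref{eq:log-main} holds. If $\delta_T=0$, letting $\Lambda\to\infty$ in the same inequality gives $f=0$.
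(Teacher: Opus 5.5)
Your proof is correct and uses the same two ingredients as the paper: the Hausdorff spectral inequality, and the sup-norm decay of the terminal high-frequency tail produced by the waiting time. It organizes them differently.

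The paper argues in two stages. First, it proves the stand-alone H\"older interpolation $\|y\|_{L^2(\Omega)}\le C_\theta\|z\|_{L^2(\Omega)}^{1-\theta}(\sup_E|y|)^\theta$ for every $y=e^{-\tau_0A}z$ (Proposition~\ref{prop:terminal-interp}). There, at a single cutoff, only the observation cost $e^{C_1\sqrt\Lambda}$ has to be beaten by the tail $e^{-\tau_0\Lambda/2}$, and the cutoff is optimized in terms of $\|z\|/\sup_E|y|$. Second, it derives logarithmic stability from the full $L^2$ terminal state by inverting the multipliers below an independently chosen cutoff. It then composes the two estimates through a comparison of logarithms.

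Your nested cutoffs play the roles of these two cutoffs: $\Lambda$ handles inversion, at cost $\Lambda^ae^{\tau_0\Lambda}$, and $4\Lambda$ handles observation, at cost $e^{C\sqrt{4\Lambda}}$ with tail $Me^{-2\tau_0\Lambda}$. The difference is that your observation level is tied to the inversion level. This also explains why your single-cutoff attempt in Step 2 fails, and why any $\theta>2$ repairs it.

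Your route is shorter, with one balancing step and no comparison of logarithms. The paper's route isolates a reusable estimate that does not involve the multipliers, and it shows that replacing full terminal data by data on $E$ costs only a H\"older exponent.

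Two minor points need fixing:
\begin{enumerate}
\item The identity $e^{C_1\Lambda}\delta_T=\delta_T(e+M/\delta_T)^{1/2}$ is not literally bounded by $M(e+M/\delta_T)^{-1/2}$. It is bounded up to a constant once you note $\delta_T\le CM$, which follows from Lemma~\ref{lem:smoothing} and $\|f\|\le\lambda_1^{-\sigma}M$. When $\Lambda$ has to be raised to a threshold, the trivial bound $\|f\|\le\lambda_1^{-\sigma}M$ finishes the argument.
\item The bound $\|h\|_{L^\infty}\le C\|A^sh\|$ with $s>d/4$ should be justified by ultracontractivity, or by splitting the semigroup as the paper does. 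It should not rest on $D(A^s)\subset H^{2s}$, which for $d\ge4$ requires more boundary regularity than $C^{1,1}$.
\end{enumerate}
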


In summary, the main contributions of the paper are as follows.
\begin{enumerate}[label=\textup{(\roman*)}]
\item
We combine the Hausdorff spectral inequality with the smoothing
induced by the positive waiting time to establish conditional
logarithmic stability for the recovery of the spatial source under
the spectral noncancellation condition and the prior
$f\in D(A^\sigma)$.

\item
For scattered observations, we derive a frequency-dependent
quasi-uniform sampling inequality and a terminal high-frequency
remainder estimate. Under explicit quasi-uniformity and independent-noise
assumptions, the reconstructions obtained using the data-driven
parameter rule converge in probability at the logarithmic rate
$(\log n)^{-\sigma}$.

\item
We conduct two numerical experiments using a finite candidate pool approximating $E_3$.
Experiment A investigates the discrete sampling
stability through the one-sided sampling constants and sampling-matrix
condition numbers for varying spectral dimensions and numbers of sensors.
Experiment B illustrates the effects of the source-free waiting time
and measurement noise on the reconstruction of smooth and nonsmooth
sources.
\end{enumerate}

The remainder of the paper is organized as follows.
Section~\ref{sec:forward} presents the forward spectral representation
and studies the terminal multipliers.
Section~\ref{sec:hsi} establishes the interpolation estimate on the
Hausdorff observation set.
Section~\ref{sec:multiplier} proves the logarithmic stability estimate
and the loss of injectivity when the source remains
active at the observation time.
Section~\ref{sec:numerical} develops the sampling and regularization
theory for scattered observations.
Section~\ref{sec:experiments} presents the two numerical experiments.
Finally, Section~\ref{sec:conclusion} concludes the paper.

\section{Forward analysis and temporal multipliers}
\label{sec:forward}

Throughout the paper, unless otherwise stated, constants may depend on the fixed parameters
$\Omega$, $E$, $\beta$, $T$, $T_0$, $\|\rho\|_{L^\infty(0,T)}$, $a$, and $c_b$,
and, where appropriate, on $\sigma$ or auxiliary interpolation parameters.
They are independent of the unknown source, the a priori bound $M$, and the
measurement error.

Let $A=-\Delta$ be the Dirichlet Laplacian in $L^2(\Omega)$.
There exist eigenpairs
\begin{equation*}
A\phi_k=\lambda_k\phi_k,
\qquad
0<\lambda_1\le\lambda_2\le\cdots\to\infty,
\end{equation*}
where $(\phi_k)_{k\ge1}$ is an orthonormal basis of $L^2(\Omega)$. For $r\ge0$,
\begin{equation*}
A^rv=\sum_{k\ge1}\lambda_k^r v_k\phi_k,
\qquad
v_k=(v,\phi_k),
\end{equation*}
with domain
\begin{equation*}
D(A^r)=\Bigl\{v\in L^2(\Omega):
\sum_{k\ge1}\lambda_k^{2r}|v_k|^2<\infty\Bigr\}.
\end{equation*}
In particular $A$ is positive self-adjoint, and $-A$ generates the contraction semigroup
\begin{equation*}
e^{-tA}v=\sum_{k\ge1}e^{-t\lambda_k}v_k\phi_k.
\end{equation*}

For $f\in L^2(\Omega)$ and $\rho\in L^1(0,T)$, the mild solution of \eqref{eq:model} is
\begin{equation}
\label{eq:Duhamel}
u(t)=\int_0^t\rho(r)e^{-(t-r)A}f\,dr.
\end{equation}
Equivalently,
\begin{equation}
\label{eq:spectral-u}
u(t)=\sum_{k\ge1}\int_0^t\rho(r)e^{-\lambda_k(t-r)}\,dr f_k\phi_k.
\end{equation}
The mild solution is well defined and unique in $C([0,T];L^2(\Omega))$ by standard semigroup theory \cite{Pazy83}.

We first recall the continuity and $L^\infty$ bounds of the
Dirichlet heat semigroup, which ensure that pointwise observations
on the compact set $E$ are well defined and bounded.

\begin{lemma}
\label{lem:smoothing}
\label{lem:interior-smooth}
For every $t>0$ and $v\in L^2(\Omega)$, the function
$e^{-tA}v$ has a continuous representative in $\Omega$.
Moreover, for each fixed $t_0>0$,
\begin{equation}
\label{eq:L2Linf}
\|e^{-tA}v\|_{L^\infty(\Omega)}
\le C(t_0)\|v\|_{L^2(\Omega)},
\qquad t\ge t_0.
\end{equation}
In particular, for every compact set $E\Subset\Omega$,
the restriction $e^{-tA}v|_E$ is continuous and satisfies
\[
\sup_{x\in E}|e^{-tA}v(x)|
\le C(t_0)\|v\|_{L^2(\Omega)},
\qquad t\ge t_0.
\]
\end{lemma}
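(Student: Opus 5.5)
We will prove the estimate by combining the eigenfunction expansion with uniform bounds on the eigenfunctions, and then obtain continuity from uniform convergence of the series. The approach uses only the spectral representation $e^{-tA}v=\sum_k e^{-t\lambda_k}v_k\phi_k$ and two standard facts for a bounded $C^{1,1}$ (indeed, any bounded) domain.

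\emph{Ultracontractivity.} The Dirichlet heat kernel satisfies $0\le p_t(x,y)\le (4\pi t)^{-d/2}e^{-|x-y|^2/(4t)}$ by domination by the free Gaussian kernel (parabolic maximum principle). This gives
\[
\|e^{-tA}\|_{L^1\to L^\infty}\le (4\pi t)^{-d/2}.
\]
By duality and the semigroup property,
\[
\|e^{-tA}\|_{L^2\to L^\infty}\le (8\pi t)^{-d/4}.
\]
A consequence is the eigenfunction bound $\|\phi_k\|_{L^\infty}=e^{s\lambda_k}\|e^{-sA}\phi_k\|_{L^\infty}\le C s^{-d/4}e^{s\lambda_k}$. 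Choosing $s=1/\lambda_k$ yields $\|\phi_k\|_{L^\infty}\le C\lambda_k^{d/4}$.

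\emph{Estimate \eqref{eq:L2Linf}.} For $t\ge t_0$, write $e^{-tA}v=e^{-t_0A}(e^{-(t-t_0)A}v)$. Since the semigroup is an $L^2$ contraction,
\[
\|e^{-tA}v\|_{L^\infty}\le (8\pi t_0)^{-d/4}\|v\|_{L^2}.
\]
So we may take $C(t_0)=(8\pi t_0)^{-d/4}$.

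\emph{Continuity.} Each $\phi_k$ is continuous in $\Omega$: it lies in $H^2$, and by bootstrapping elliptic regularity it lies in $H^m_{\rm loc}$ for all $m$, hence is $C^\infty$ in $\Omega$ by Sobolev embedding. By Weyl's law, $\lambda_k\ge ck^{2/d}$. Using Cauchy--Schwarz,
\[
\sum_k e^{-t\lambda_k}|v_k|\,\|\phi_k\|_{L^\infty}\le \|v\|_{L^2}\Bigl(\sum_k C\lambda_k^{d/2}e^{-2t\lambda_k}\Bigr)^{1/2}<\infty.
\]
The spectral series therefore converges uniformly on $\Omega$, its sum is a continuous representative of $e^{-tA}v$, and it agrees a.e.\ with the $L^2$ limit. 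Restricting to the compact set $E$ gives the final statement, because the supremum of a continuous function on $E$ is bounded by its $L^\infty(\Omega)$ norm, which is controlled by \eqref{eq:L2Linf}.

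The only nontrivial inputs are the Gaussian upper bound for the Dirichlet heat kernel and Weyl's lower bound on eigenvalue growth. If one prefers to avoid the lower Weyl bound, a Sobolev-embedding route works instead: $e^{-tA}v\in D(A^m)\subset H^{2m}(\Omega)\subset C(\overline\Omega)$ for $2m>d/2$. This uses the $C^{1,1}$ regularity iteratively, which is delicate beyond $H^2$, so interior regularity plus the kernel bound is the safer path. I expect the main care to be in justifying that the identification of the pointwise sum with the $L^2$ element is consistent; this follows from uniform convergence.
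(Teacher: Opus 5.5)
Your proof is correct. The $L^\infty$ estimate follows the paper's argument: the paper cites ultracontractivity from Davies and Ouhabaz, while you derive it, with the explicit constant $(8\pi t_0)^{-d/4}$, from Gaussian domination and the $TT^{*}$ argument.

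The continuity part takes a genuinely different route.
\begin{itemize}
\item The paper applies iterated interior elliptic regularity to $w=e^{-tA}v$ itself. It uses $\|A^je^{-tA}\|\le C_jt^{-j}$ to bound $\|w\|_{H^{2\ell}(V)}$ on nested interior sets, then local Sobolev embedding with $2\ell>d/2$, and finally patches the local continuous representatives together.
\item You apply elliptic regularity only to the eigenfunctions. You turn ultracontractivity into $\|\phi_k\|_{L^\infty}\le C\lambda_k^{d/4}$ and combine it with the lower Weyl bound to get uniform absolute convergence of the spectral series by the M-test. The sum is a single global continuous representative, identified with the $L^2$ element because uniform convergence on bounded $\Omega$ implies $L^2$ convergence.
\end{itemize}

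What each route buys:
\begin{itemize}
\item Yours needs no patching. As a by-product it gives the pointwise identity $e^{-tA}v(x)=\sum_k e^{-t\lambda_k}v_k\phi_k(x)$ for every $x\in\Omega$, which is what justifies evaluating truncated spectral series at sensor locations in the numerical section.
\item The paper's avoids Weyl's law and eigenfunction sup bounds. It also gives local $H^{2\ell}$ control for every $\ell$, hence interior smoothness with quantitative derivative bounds. These are of the type $\|\nabla v\|_{L^\infty(K)}\le C(1+R)^m\|v\|_{L^2(\Omega)}$ for $v\in V_R$, which is invoked in Step~3 of Lemma~\ref{qu-lem:sampling}.
\end{itemize}

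Your caution about the global embedding $D(A^m)\subset H^{2m}(\Omega)$ on a $C^{1,1}$ domain is well placed; the paper likewise uses only interior regularity.

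Finally, you can drop both Weyl's law and the eigenfunction bound. Applying your $L^2\to L^\infty$ estimate to $e^{-\frac{t}{2}A}(I-\Pi_\Lambda)e^{-\frac{t}{2}A}v$ gives $\|(I-\Pi_\Lambda)e^{-tA}v\|_{L^\infty(\Omega)}\le(4\pi t)^{-d/4}\|(I-\Pi_\Lambda)e^{-\frac{t}{2}A}v\|_{L^2(\Omega)}\to0$ as $\Lambda\to\infty$. So the continuous partial sums are already uniformly Cauchy on $\Omega$. This is the same midpoint splitting the paper uses in the proof of Proposition~\ref{prop:terminal-interp}.
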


\begin{proof}
For every $m\ge0$, the spectral theorem gives
\[
\begin{aligned}
\|A^m e^{-tA}\|_{L^2(\Omega)\to L^2(\Omega)}
=\sup_{\lambda\in\sigma(A)}\lambda^m e^{-t\lambda}\le t^{-m}\sup_{r>0}r^m e^{-r}
=C_m t^{-m}.
\end{aligned}
\]
In particular, $e^{-tA}v\in D(A^m)$ for every $t>0$
and $v\in L^2(\Omega)$.

The standard ultracontractivity estimate for the Dirichlet
heat semigroup (see \cite{Davies89,Ouhabaz05}) gives
\[
\|e^{-tA}v\|_{L^\infty(\Omega)}
\le Ct^{-d/4}\|v\|_{L^2(\Omega)},
\qquad t>0.
\]
Thus, for $t\ge t_0>0$,
\[
\|e^{-tA}v\|_{L^\infty(\Omega)}
\le Ct_0^{-d/4}\|v\|_{L^2(\Omega)},
\]
which proves \eqref{eq:L2Linf}.

It remains to establish continuity. Fix a compact set
$K\Subset\Omega$ and open sets $U,V$ such that
$K\Subset V\Subset U\Subset\Omega$.
Choose an integer $\ell\ge1$ with $2\ell>d/2$.
Interior elliptic regularity, iterated $\ell$ times
(see \cite{GilbargTrudinger01}), yields
\[
\|w\|_{H^{2\ell}(V)}
\le C\sum_{j=0}^{\ell}\|A^j w\|_{L^2(U)},
\qquad w\in D(A^\ell).
\]
Applying this estimate to $w=e^{-tA}v$ and using the
spectral estimate above, we obtain
\[
\|e^{-tA}v\|_{H^{2\ell}(V)}
\le C\sum_{j=0}^{\ell}t^{-j}\|v\|_{L^2(\Omega)}.
\]
Since $2\ell>d/2$, the local Sobolev embedding gives
a continuous representative near $K$. These representatives
agree on overlapping neighborhoods, since they are continuous
and agree almost everywhere. As $K\Subset\Omega$ is arbitrary,
they define a continuous representative in $\Omega$.

For this continuous representative, the pointwise absolute
value is bounded by the essential supremum on $\Omega$.
Hence \eqref{eq:L2Linf} implies
\[
\sup_{x\in E}|e^{-tA}v(x)|
\le C(t_0)\|v\|_{L^2(\Omega)},
\qquad t\ge t_0.
\]
This completes the proof.
\end{proof}

\section{Hausdorff spectral inequality and terminal interpolation}
\label{sec:hsi}

\subsection{Hausdorff spectral inequality}

Recall that for $0<\beta\le d$ and $E\subset\mathbb R^d$, the
$\beta$-dimensional Hausdorff content is defined by \eqref{eq:content}; see
\cite{Falconer90,Mattila95} for background. In particular, $E$ may have zero
$d$-dimensional Lebesgue measure.

Let
\[
\Pi_\Lambda v:=\sum_{\lambda_k\le\Lambda}v_k\phi_k .
\]
The central geometric input is the following consequence of
\cite{Green25}. Related propagation-of-smallness results appear in
\cite{LogunovMalinnikova18,Malinnikova04}.

\begin{proposition}[Hausdorff spectral inequality]
\label{prop:HSI}
Assume \eqref{eq:haus}. There exist constants $C_1\ge1$ and
$\Lambda_0\ge1$, depending only on $\Omega$, $E$, and $\beta$, such that
\begin{equation}
\label{eq:HSI}
\norm{\Pi_\Lambda v}_{L^2(\Omega)}
\le C_1 e^{C_1\sqrt\Lambda}
\sup_{x\in E}|\Pi_\Lambda v(x)|
\end{equation}
for every $v\in L^2(\Omega)$ and every $\Lambda\ge\Lambda_0$.
\end{proposition}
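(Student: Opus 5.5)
The plan is to derive \eqref{eq:HSI} from the propagation-of-smallness estimate of \cite{Green25} (in the spirit of \cite{LogunovMalinnikova18,Malinnikova04}) by means of the standard Lebeau--Robbiano ``lifting'' of spectral sums to harmonic functions in one extra variable. Fix $v\in L^2(\Omega)$ and $\Lambda\ge\Lambda_0$. Since $\Pi_\Lambda v$ is a finite sum of eigenfunctions, it is smooth up to $\partial\Omega$ by the $C^{1,1}$ regularity of $\Omega$. We set
\[
F(x,y):=\sum_{\lambda_k\le\Lambda}v_k\,\frac{\sinh(\sqrt{\lambda_k}\,y)}{\sqrt{\lambda_k}}\,\phi_k(x),\qquad (x,y)\in\Omega\times(-1,1).
\]
This function satisfies $\Delta_{x,y}F=0$, vanishes on $\partial\Omega\times(-1,1)$ and at $y=0$, and has $\partial_yF(\cdot,0)=\Pi_\Lambda v$. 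One could equally use $\cosh$, which gives $F(\cdot,0)=\Pi_\Lambda v$ directly. By orthonormality there are two elementary bounds:
\[
\|F\|_{H^1(\Omega\times(-1,1))}\le Ce^{\sqrt\Lambda}\|\Pi_\Lambda v\|_{L^2(\Omega)},
\qquad
\|F\|_{L^2(\Omega\times(-1/2,1/2))}\ge c\,\Lambda^{-1}\|\Pi_\Lambda v\|_{L^2(\Omega)}.
\]
The lower bound comes from the fact that $\int_{-1/2}^{1/2}\sinh^2(\sqrt{\mu}y)/\mu\,dy$ is bounded below by a polynomial in $\mu^{-1}$.

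The core step is the propagation of smallness from the thin set $E\times\{0\}\subset\mathbb R^{d+1}$. This set has positive $\beta$-dimensional Hausdorff content with $\beta>d-1=(d+1)-2$, which is exactly the threshold in \cite{Green25} (and in Logunov--Malinnikova) for gradients of harmonic functions. The result we would invoke states the following: if $h$ is harmonic in a domain $D\subset\mathbb R^{d+1}$, $K\Subset D$, and $S\Subset D$ satisfies $\mathcal H^{\beta}_\infty(S)\ge m>0$ with $\beta>(d+1)-2$, then
\[
\sup_K|\nabla h|\le C\bigl(\sup_S|\nabla h|\bigr)^{\gamma}\bigl(\sup_D|\nabla h|\bigr)^{1-\gamma}
\]
for some $\gamma\in(0,1)$ and $C$ depending only on $D,K,S,m,\beta$. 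We apply it with $h=F$ and $S=E\times\{0\}$. On $S$ the tangential $x$-derivatives vanish, because $F(\cdot,0)\equiv0$, so $\sup_S|\nabla F|=\sup_E|\Pi_\Lambda v|$. For the upper bound, interior (or boundary, using the odd reflection across $\partial\Omega$ afforded by $C^{1,1}$ regularity) elliptic estimates together with Lemma~\ref{lem:smoothing}-type Sobolev embedding give $\sup_D|\nabla F|\le C\Lambda^{N}e^{\sqrt\Lambda}\|\Pi_\Lambda v\|_{L^2}$. We then propagate from an interior compact set $K$ to all of $\Omega\times(-1/2,1/2)$. Near $\partial\Omega$ we use the standard boundary doubling/three-ball argument, or the version of \cite{Green25} stated up to the boundary with Dirichlet data, to bound $\|F\|_{L^2(\Omega\times(-1/2,1/2))}$ by $C(\sup_K|\nabla F|)^{\gamma'}(e^{\sqrt\Lambda}\|\Pi_\Lambda v\|)^{1-\gamma'}$.

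Combining these bounds gives
\[
\Lambda^{-1}\|\Pi_\Lambda v\|\le C\bigl(\sup_E|\Pi_\Lambda v|\bigr)^{\theta}\bigl(\Lambda^{N}e^{\sqrt\Lambda}\|\Pi_\Lambda v\|\bigr)^{1-\theta}.
\]
Dividing and raising to the power $1/\theta$ yields $\|\Pi_\Lambda v\|\le Ce^{C\sqrt\Lambda}\sup_E|\Pi_\Lambda v|$, where the polynomial factors are absorbed into $e^{C\sqrt\Lambda}$ for $\Lambda\ge\Lambda_0$. This is \eqref{eq:HSI}.

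The main obstacle is the propagation-of-smallness step: checking that the cited result applies to gradients on a set of dimension $>d-1$ placed in the hyperplane $\{y=0\}$, and carrying the estimate up to the Dirichlet boundary. If the gradient version is awkward to use, our first fallback is the $\cosh$ lift, which applies a function-value version to $h=F$ on $E\times\{0\}$; this needs content above $(d+1)-1=d$ and so fails. We would therefore stick with the gradient version. If needed, we would simply quote the spectral inequality of \cite{Green25} (see also \cite{BurqMoyano23}), which is stated in exactly the form \eqref{eq:HSI}, and verify only that its hypotheses on $\Omega$ and $E$ match \eqref{eq:haus}.
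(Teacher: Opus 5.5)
Your argument is correct, but it is not the paper's route. The paper gives no proof of \eqref{eq:HSI}: it quotes Theorem~2.1 of \cite{Green25} and checks that \eqref{eq:haus} is that theorem's hypothesis with $\delta=\beta-(d-1)$. That is exactly your closing fallback.

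Your main route instead reconstructs the mechanism behind the cited theorem: the $\sinh$ lift, the identity $\nabla_{x,y}F(\cdot,0)=(0,\Pi_\Lambda v)$, gradient propagation of smallness in $\mathbb R^{d+1}$ at the threshold $(d+1)-2+\delta=\beta$, and interior bounds $\sup_D|\nabla F|\le Ce^{\sqrt\Lambda}\norm{\Pi_\Lambda v}_{L^2(\Omega)}$. This is essentially what the paper itself does later, in Step~2 of the proof of Lemma~\ref{qu-lem:sampling}, where it needs constants that depend on the observation set only through a lower bound on its content.

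The real difference is the global step. Your propagation of $F$ from an interior set $K$ up to the Dirichlet boundary is true. However, it is precisely the Lebeau--Robbiano-type interpolation inequality for the lift (via boundary Carleman or boundary doubling estimates) that underlies the classical spectral inequality from an open set. The paper instead invokes that spectral inequality directly on a fixed ball $\omega$, namely $\norm{\Pi_\Lambda v}_{L^2(\Omega)}\le Ce^{C\sqrt\Lambda}\sup_\omega|\Pi_\Lambda v|$. It then uses $\sup_\omega|\Pi_\Lambda v|\le\sup_K|\nabla F|$ whenever $\omega\times\{0\}\subset K$. Organizing your argument this way removes all boundary analysis and makes your lower bound on $\norm{F}_{L^2}$ unnecessary.

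You should also make the localization explicit, since the gradient lemma is a ball statement. Cover $E$ by finitely many balls $B_j$ with $8B_j\Subset\Omega$, and use subadditivity to pick $j$ with $\mathcal H^\beta_\infty(E\cap B_j)\ge\mathcal H^\beta_\infty(E)/J$.

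The citation buys brevity. Your route buys transparency about where $\beta>d-1$ enters, and uniformity of the constants in the observation set.

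Two harmless slips:
\begin{itemize}
\item Since $\sinh^2 s\ge s^2$, your lower bound on $\norm{F}_{L^2(\Omega\times(-1/2,1/2))}$ holds with a constant, without the factor $\Lambda^{-1}$.
\item On a $C^{1,1}$ domain the eigenfunctions are $W^{2,p}$ up to the boundary, not smooth.
\end{itemize}
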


Estimate \eqref{eq:HSI} is Theorem~2.1 of \cite{Green25}; the hypothesis \eqref{eq:haus} corresponds to the positive $(d-1+\delta)$-dimensional Hausdorff-content assumption there, with $\delta=\beta-(d-1)>0$. The threshold $d-1$ is natural for this theorem; it is also the dimension at which nodal hypersurfaces obstruct completely general observation statements. The finer log-Hausdorff theory of \cite{HuangWangWang26} suggests possible critical refinements, but we do not use them here.

\subsection{Terminal interpolation from thin sets}

The following proposition bridges the terminal trace on $E$ and the global final state. It combines the Hausdorff spectral inequality
with the exponential decay generated by the positive waiting time. The decisive comparison is between the observation cost
$e^{C\sqrt\Lambda}$ and the terminal damping
$e^{-c\Lambda}$.

\begin{proposition}
\label{prop:terminal-interp}
Assume \eqref{eq:haus} and let $\tau_0>0$. For every
$\theta\in(0,1)$ there exists $C_\theta\ge1$ such that every
\[
y=e^{-\tau_0A}z,
\qquad z\in L^2(\Omega),
\]
satisfies
\begin{equation}
\label{eq:terminal-interp}
\norm y_{L^2(\Omega)}
\le
C_\theta
\norm z_{L^2(\Omega)}^{1-\theta}
\sup_{x\in E}|y|^\theta .
\end{equation}
\end{proposition}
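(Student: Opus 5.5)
The plan is to split $y$ into low and high frequencies at a cutoff $\Lambda\ge\Lambda_0$, apply the Hausdorff spectral inequality (Proposition~\ref{prop:HSI}) to the low part, control the high part by the terminal damping, and then optimize in $\Lambda$. Write $\delta:=\sup_{x\in E}|y(x)|$ and $N:=\|z\|_{L^2(\Omega)}$. By Lemma~\ref{lem:smoothing}, $y$ is continuous on $E$, and $\delta\le C(\tau_0)N$. If $\delta=0$, the argument below with $\Lambda\to\infty$ forces $y=0$. Otherwise we may assume $\delta\le N$ after adjusting constants.

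\textbf{Step 1 (splitting).} Write $y=\Pi_\Lambda y+(I-\Pi_\Lambda)y$. Spectrally,
\[
\|(I-\Pi_\Lambda)y\|_{L^2}\le e^{-\tau_0\Lambda}N.
\]
For the low part, Proposition~\ref{prop:HSI} gives
\[
\|\Pi_\Lambda y\|_{L^2}\le C_1e^{C_1\sqrt\Lambda}\sup_E|\Pi_\Lambda y|\le C_1e^{C_1\sqrt\Lambda}\bigl(\delta+\sup_E|(I-\Pi_\Lambda)y|\bigr).
\]

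\textbf{Step 2 (high-frequency trace on $E$).} This is the main technical point. We need a pointwise bound on the tail $(I-\Pi_\Lambda)y$ on $E$ that still decays like $e^{-c\Lambda}$. Write
\[
(I-\Pi_\Lambda)y=e^{-\frac{\tau_0}{2}A}w,\qquad w:=(I-\Pi_\Lambda)e^{-\frac{\tau_0}{2}A}z .
\]
Then $\|w\|_{L^2}\le e^{-\tau_0\Lambda/2}N$. Lemma~\ref{lem:smoothing} with $t_0=\tau_0/2$ then gives
\[
\sup_E|(I-\Pi_\Lambda)y|\le C(\tau_0/2)e^{-\tau_0\Lambda/2}N.
\]
Combining with Step 1,
\[
\|y\|_{L^2}\le C e^{C_1\sqrt\Lambda}\delta+Ce^{C_1\sqrt\Lambda-\tau_0\Lambda/2}N+e^{-\tau_0\Lambda}N .
\]

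\textbf{Step 3 (absorbing the square root).} For any $\epsilon>0$, the elementary inequality $C_1\sqrt\Lambda\le \epsilon\Lambda+C_1^2/(4\epsilon)$ holds. Hence
\[
e^{C_1\sqrt\Lambda}\le C_\epsilon e^{\epsilon\Lambda},\qquad e^{C_1\sqrt\Lambda-\tau_0\Lambda/2}\le C_\epsilon e^{-(\tau_0/2-\epsilon)\Lambda}.
\]
Fix $\epsilon$ small; the precise choice depends on $\theta$ and is made in Step~4. This yields
\[
\|y\|\le C_\epsilon\bigl(e^{\epsilon\Lambda}\delta+e^{-\gamma\Lambda}N\bigr),\qquad \gamma:=\tau_0/2-\epsilon>0,
\]
for all $\Lambda\ge\Lambda_0$. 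This is where the waiting time beats the observation cost.

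\textbf{Step 4 (optimization).} Balance the two terms by choosing
\[
\Lambda=\frac{1}{\epsilon+\gamma}\log(N/\delta),
\]
which gives
\[
\|y\|\le 2C_\epsilon N^{1-\theta'}\delta^{\theta'},\qquad \theta'=\frac{\gamma}{\epsilon+\gamma}=1-\frac{2\epsilon}{\tau_0}.
\]
Given $\theta\in(0,1)$, choose $\epsilon=\tau_0(1-\theta)/2$, so that $\theta'=\theta$ and $\gamma=\tau_0\theta/2>0$.

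If the resulting $\Lambda$ is below $\Lambda_0$, then $\delta\ge N e^{-(\epsilon+\gamma)\Lambda_0}$. In that case the trivial bound $\|y\|\le N\le C N^{1-\theta}\delta^\theta$ suffices. This enlarges $C_\theta$, which then depends only on $\Omega,E,\beta,\tau_0,\theta$.
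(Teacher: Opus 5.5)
Your proposal is correct and follows essentially the same route as the paper. Both arguments use the low/high splitting with Proposition~\ref{prop:HSI} on $\Pi_\Lambda y$, the midpoint factorization $e^{-\frac{\tau_0}{2}A}(I-\Pi_\Lambda)e^{-\frac{\tau_0}{2}A}z$ for the uniform tail bound on $E$, absorption of $C_1\sqrt\Lambda$ into $\frac{(1-\theta)\tau_0}{2}\Lambda$, and the same cutoff $\Lambda=\frac{2}{\tau_0}\log(N/\delta)$, with the contraction bound $\|y\|_{L^2(\Omega)}\le N$ handling the case where this cutoff falls below $\Lambda_0$.
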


\begin{proof}
If $\norm z_{L^2(\Omega)}=0$, there is nothing to prove, we consider $\norm z_{L^2(\Omega)}>0$. For $\Lambda\ge\Lambda_0$, we first estimate the high-frequency part generated by the terminal waiting interval. Since \(y=e^{-\tau_0A}z,\) the spectral representation gives
\[
(I-\Pi_\Lambda)y
=
\sum_{\lambda_k>\Lambda}
e^{-\tau_0\lambda_k}z_k\phi_k .
\]
Therefore,
\begin{equation}
\label{eq:high-L2-direct}
\norm{(I-\Pi_\Lambda)y}_{L^2(\Omega)}
\le
e^{-\tau_0\Lambda}\norm z_{L^2(\Omega)} .
\end{equation}

To obtain the uniform norm estimate, split the semigroup at the midpoint of
the waiting interval:
\[
(I-\Pi_\Lambda)y
=
e^{-\frac{\tau_0}{2}A}
(I-\Pi_\Lambda)
e^{-\frac{\tau_0}{2}A}z .
\]
By Lemma~\ref{lem:smoothing},
\[
\norm{
e^{-\frac{\tau_0}{2}A}v
}_{L^\infty(\Omega)}
\le C_{\tau_0}\norm v_{L^2(\Omega)} .
\]
On the other hand,
\[
\norm{
(I-\Pi_\Lambda)
e^{-\frac{\tau_0}{2}A}z
}_{L^2(\Omega)}
\le
e^{-\frac{\tau_0}{2}\Lambda}\norm z_{L^2(\Omega)} .
\]
Consequently,
\begin{equation}
\label{eq:high-Linf-direct}
\norm{(I-\Pi_\Lambda)y}_{L^\infty(\Omega)}
\le
C e^{-\frac{\tau_0}{2}\Lambda}\norm z_{L^2(\Omega)} .
\end{equation}

Using Proposition~\ref{prop:HSI}, we have
\[
\|\Pi_\Lambda y\|_{L^2(\Omega)}
\le
C_1 e^{C_1\sqrt\Lambda}
\sup_{x\in E}|\Pi_\Lambda y|.
\]
Since
\[
\Pi_\Lambda y
=
y-(I-\Pi_\Lambda)y ,
\]
we obtain
\[
\sup_{x\in E}|\Pi_\Lambda y|
\le \sup_{x\in E}|y|+
\|(I-\Pi_\Lambda)y\|_{L^\infty(\Omega)} .
\]
Hence, by \eqref{eq:high-Linf-direct},
\[
\|\Pi_\Lambda y\|_{L^2(\Omega)}
\le
C_1 e^{C_1\sqrt\Lambda}
\left(
\sup_{x\in E}|y|+
C e^{-\frac{\tau_0}{2}\Lambda}\norm z_{L^2(\Omega)}
\right).
\]
Combining this estimate with \eqref{eq:high-L2-direct}, we obtain
\begin{align*}
\norm y_{L^2(\Omega)}
&\le
\norm{\Pi_\Lambda y}_{L^2(\Omega)}
+
\norm{(I-\Pi_\Lambda)y}_{L^2(\Omega)}
\nonumber\\
&\le
C e^{C_1\sqrt\Lambda}\sup_{x\in E}|y|
+
C\norm z_{L^2(\Omega)}
e^{C_1\sqrt\Lambda-\frac{\tau_0}{2}\Lambda}
+
\norm z_{L^2(\Omega)} e^{-\tau_0\Lambda}.
\end{align*}

Fix $\theta\in(0,1)$. Since $\sqrt\Lambda=o(\Lambda)$, there exists $C_\theta'$ such that
\[
C_1\sqrt\Lambda
\le
\frac{1-\theta}{2}\tau_0\Lambda+C_\theta',
\qquad
\Lambda\ge\Lambda_0 .
\]
Enlarging $C_\theta$ if necessary, we may assume that
$C e^{C_\theta'}\le C_\theta$ and that the term $\norm z_{L^2(\Omega)} e^{-\tau_0\Lambda}$
is absorbed into $C_\theta \norm z_{L^2(\Omega)} e^{-\frac{\theta\tau_0}{2}\Lambda}$. Therefore,
\begin{equation}
\label{eq:interp-reduced}
\norm y_{L^2(\Omega)}
\le C_\theta\sup_{x\in E}|y| e^{\frac{1-\theta}{2}\tau_0\Lambda}
+C_\theta \norm z_{L^2(\Omega)} e^{-\frac{\theta\tau_0}{2}\Lambda}.
\end{equation}

If $\sup_{x\in E}|y|\ge \norm z_{L^2(\Omega)}$, the semigroup
contraction gives
\[
\|y\|_{L^2(\Omega)}
\le \norm z_{L^2(\Omega)}
\le \norm z_{L^2(\Omega)}^{1-\theta}
\left(\sup_{x\in E}|y|\right)^\theta .
\]
If $\sup_{x\in E}|y|=0$, letting $\Lambda\to\infty$ in
\eqref{eq:interp-reduced} gives $y=0$.
Now suppose
\[
0<\sup_{x\in E}|y|<\norm z_{L^2(\Omega)}
\]
and put
\[
X_*:=\frac{2}{\tau_0}
\log\frac{\norm z_{L^2(\Omega)}}{\sup_{x\in E}|y|}.
\]
When $X_*\ge\Lambda_0$, choose $\Lambda=X_*$ in
\eqref{eq:interp-reduced}. Then
\[
\norm y_{L^2(\Omega)}
\le
C_\theta
\sup_{x\in E}|y|\,
e^{\frac{(1-\theta)\tau_0}{2}X_*}
+
C_\theta
\norm z_{L^2(\Omega)}
e^{-\frac{\theta\tau_0}{2}X_*}.
\]
Since
\[
X_*=\frac{2}{\tau_0}
\log\frac{\norm z_{L^2(\Omega)}}{\sup_{x\in E}|y|},
\]
we have
\[
e^{\frac{(1-\theta)\tau_0}{2}X_*}
=
\left(
\frac{\norm z_{L^2(\Omega)}}{\sup_{x\in E}|y|}
\right)^{1-\theta},
\qquad
e^{-\frac{\theta\tau_0}{2}X_*}
=
\left(
\frac{\norm z_{L^2(\Omega)}}{\sup_{x\in E}|y|}
\right)^{-\theta}.
\]
Therefore both terms are bounded by
\[
C_\theta
\norm z_{L^2(\Omega)}^{1-\theta}
\left(\sup_{x\in E}|y|\right)^\theta .
\]
When $X_*<\Lambda_0$, we have
\[
\frac{\sup_{x\in E}|y|}{\norm z_{L^2(\Omega)}}
>
e^{-\frac{\tau_0}{2}\Lambda_0}.
\]
The contraction estimate therefore yields
\[
\|y\|_{L^2(\Omega)}
\le \norm z_{L^2(\Omega)}
\le
e^{\frac{\theta\tau_0}{2}\Lambda_0}
\norm z_{L^2(\Omega)}^{1-\theta}
\left(\sup_{x\in E}|y|\right)^\theta .
\]
These cases prove
\[
\norm y_{L^2(\Omega)}
\le C_\theta
\norm z_{L^2(\Omega)}^{1-\theta}
\left(\sup_{x\in E}|y|\right)^\theta .
\]
This proves \eqref{eq:terminal-interp}.
\end{proof}

\section{Stability results}
\label{sec:multiplier}

In this section, we prove the conditional logarithmic stability
estimate and then show that terminal observations on $E$ may fail
to determine the source uniquely when the forcing remains active
at the terminal time.

\subsection{Logarithmic stability}

We first derive a logarithmic estimate from the full terminal
state, then use Proposition~\ref{prop:terminal-interp} to express
the bound in terms of observations on $E$.

\begin{proof}[Proof of Theorem~\ref{thm:log}]
We first prove the logarithmic stability estimate
\eqref{eq:log-main}. By linearity, let
\[
f:=f_1-f_2,
\qquad
y:=u_1(T)-u_2(T).
\]

Using \eqref{eq:switch-off} and \eqref{eq:Duhamel}, we have 
\begin{equation*}
y=e^{-\tau_0A}z,
\qquad
z:=\int_0^{T_0}\rho(r)e^{-(T_0-r)A}f\,dr.
\end{equation*}
The a priori bound and the contraction property give
\begin{equation*}
\norm{z}_{L^2(\Omega)}
\le \norm{\rho}_{L^1(0,T_0)}\norm f_{L^2(\Omega)}\le CM.
\end{equation*}

If $\sup_{x\in E}|y|=0$, Proposition~\ref{prop:terminal-interp}
gives $y=0$ in $L^2(\Omega)$. By the spectral representation
\eqref{eq:spectral-u} at $t=T$ and the definition of $b_k$
in Assumption~\ref{ass:rho}, we have
\[
y=\sum_{k\ge1}b_k f_k\phi_k
\quad\text{in }L^2(\Omega).
\]
Consequently, $b_k f_k=0$ for every $k$. Since
Assumption~\ref{ass:rho} implies $|b_k|>0$ for every $k$,
we obtain $f=0$, as required.

In the remainder of the proof, assume
$\sup_{x\in E}|y|>0$. Since $y$ is continuous in $\Omega$
by Lemma~\ref{lem:interior-smooth}, the identity
$\|y\|_{L^2(\Omega)}=0$ would imply $y\equiv0$ in $\Omega$,
contradicting $\sup_{x\in E}|y|>0$. Hence
$\|y\|_{L^2(\Omega)}>0$.

For \(\lambda_k\le \Lambda\), the multiplier lower bound and
\(\lambda_k\le \Lambda\) give
\[
|b_k|\ge c_b\lambda_k^{-a}e^{-\tau_0\lambda_k}
\ge c_b\Lambda^{-a}e^{-\tau_0\Lambda}.
\]
Summing over \(\lambda_k\le\Lambda\) yields
\[
\|\Pi_\Lambda f\|_{L^2(\Omega)}^2
\le c_b^{-2}\Lambda^{2a}e^{2\tau_0\Lambda}
\sum_{\lambda_k\le\Lambda}|b_kf_k|^2
\le c_b^{-2}\Lambda^{2a}e^{2\tau_0\Lambda}
\|y\|_{L^2(\Omega)}^2,
\]
Taking square roots gives
\[
\|\Pi_\Lambda f\|_{L^2(\Omega)}
\le c_b^{-1}\Lambda^a e^{\tau_0\Lambda}
\|y\|_{L^2(\Omega)}.
\]

The spectral Sobolev prior gives
\[
\norm{(I-\Pi_\Lambda)f}_{L^2(\Omega)}
\le M\Lambda^{-\sigma}.
\]
Consequently,
\begin{equation}
\label{eq:source-split}
\norm f_{L^2(\Omega)}\le
C\Lambda^{a}e^{\tau_0\Lambda}
\|y\|_{L^2(\Omega)}
+M\Lambda^{-\sigma}.
\end{equation}
Let
\[
C_0:=\exp\bigl(
-\max\{1,2\tau_0\lambda_1\}
\bigr).
\]
If $\|y\|_{L^2(\Omega)}\ge C_0M$, the a priori bound
on $\norm f_{L^2(\Omega)}$ and
$\log(e+M/\|y\|_{L^2(\Omega)})
\le\log(e+C_0^{-1})$ give
\[
\norm f_{L^2(\Omega)}\le CM
\left[
\log\left(e+\frac{M}{\|y\|_{L^2(\Omega)}}\right)
\right]^{-\sigma}.
\]
If $0<\|y\|_{L^2(\Omega)}<C_0M$, put
$L:=\log(M/\|y\|_{L^2(\Omega)})$ and choose
$\Lambda=L/(2\tau_0)$. Then
$L>\max\{1,2\tau_0\lambda_1\}$,
so $\Lambda>\lambda_1$.
Estimate \eqref{eq:source-split} becomes
\[
\norm f_{L^2(\Omega)}
\le CM\left(
L^a e^{-L/2}+L^{-\sigma}
\right)
\le CM L^{-\sigma},
\]
where the last inequality follows from \(\sup_{L\ge1}
L^{a+\sigma}e^{-L/2}<\infty.\)

Since
$L\sim\log(e+M/\|y\|_{L^2(\Omega)})$
for $L\ge1$, both cases yield
\begin{equation}
\label{eq:full-terminal-in-proof}
\norm f_{L^2(\Omega)}\le CM
\left[
\log\left(e+\frac{M}{\|y\|_{L^2(\Omega)}}\right)
\right]^{-\sigma}.
\end{equation}

Next, we apply Proposition~\ref{prop:terminal-interp}
with a fixed $\theta\in(0,1)$ to
$y=e^{-\tau_0A}z$. Using the bound
on $z$, 
\[
\|y\|_{L^2(\Omega)}
\le CM^{1-\theta}\left(\sup_{x\in E}|y|\right)^\theta,
\qquad
\frac{M}{\|y\|_{L^2(\Omega)}}
\ge C^{-1}
\left(\frac{M}{\sup_{x\in E}|y|}\right)^\theta.
\]
There exists
$c_\theta>0$ such that
\[
\log\left(e+\frac{M}{\|y\|_{L^2(\Omega)}}\right)
\ge\log\left(e+C^{-1}\left(\frac{M}{\sup_{x\in E}|y|}\right)^\theta\right)
\ge c_\theta\log\left(e+\frac{M}{\sup_{x\in E}|y|}\right).
\]
Indeed, the logarithms are comparable
for $0\le M/\sup_{x\in E}|y(x)|\le1$, while their ratio
tends to $\theta$ as $M/\sup_{x\in E}|y(x)|\to\infty$
and stays positive on every bounded interval.
Combining this inequality with
\eqref{eq:full-terminal-in-proof}
proves \eqref{eq:log-main}.
\end{proof}

\subsection{Noninjectivity for a source active at the terminal time}
Here we drop the condition \eqref{eq:switch-off}; the forward solution remains defined by \eqref{eq:Duhamel}. Without a source-free waiting interval, the terminal multiplier need not provide exponential spatial smoothing.

\begin{proposition}
Let $E\Subset\Omega$ be compact. Suppose $\rho\in W^{1,\infty}(0,T)$ and its Lipschitz representative satisfies $\rho(T)\ne0$. There exists a nonzero source
\(f\in\bigcap_{m\ge0}D(A^m)\), whose solution of \eqref{eq:model} satisfies $u(\cdot,T)|_E=0$. Consequently, for every finite $\sigma\ge0$ and every $M>0$, the terminal observation on $E$ is noninjective even among sources satisfying $\|A^\sigma f\|_{L^2(\Omega)}\le M$.
\end{proposition}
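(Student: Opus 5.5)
The plan is to prescribe the terminal state first and then solve for $f$ mode by mode. Concretely, I will take a nonzero $w\in C_c^\infty(\Omega\setminus E)$ as the target $u(\cdot,T)$. Since $u(T)=\sum_k b_k f_k\phi_k$ with $b_k:=\int_0^T\rho(t)e^{-(T-t)\lambda_k}\,dt$ (the same expression as \eqref{eq:spectral-u} at $t=T$, now without \eqref{eq:switch-off}), I will set
\[
f_k:=\frac{w_k}{b_k}\quad (b_k\neq 0),\qquad f_k:=0\quad (b_k=0),
\]
and arrange that $w_k=0$ whenever $b_k=0$. Then $u(T)=w$ identically, so $u(\cdot,T)|_E=0$. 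Note that finite combinations of eigenfunctions cannot work here, since Proposition~\ref{prop:HSI} forces $\Pi_\Lambda v|_E=0\Rightarrow \Pi_\Lambda v=0$; the construction genuinely needs infinitely many modes.

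\textbf{Step 1: asymptotics of the multipliers.} Using the Lipschitz representative of $\rho$ and integrating by parts in $b_k$ gives
\[
\lambda_k b_k=\rho(T)-\rho(0)e^{-T\lambda_k}-\int_0^T\rho'(t)e^{-(T-t)\lambda_k}\,dt .
\]
The last integral is bounded by $\|\rho'\|_{L^\infty}/\lambda_k$. Hence $\lambda_k b_k\to\rho(T)\neq0$, so there is $K_0$ with $|b_k|\ge|\rho(T)|/(2\lambda_k)$ for all $k\ge K_0$. In particular, the set $Z:=\{k:b_k=0\}$ is finite.

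\textbf{Step 2: choice of $w$.} Since $E$ is compact and $E\Subset\Omega$, the set $\Omega\setminus E$ is open and nonempty, so $C_c^\infty(\Omega\setminus E)$ is infinite dimensional. The finitely many linear conditions $(w,\phi_k)=0$ for $k\in Z$ cut out a subspace of finite codimension, so it contains some $w\neq0$.

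\textbf{Step 3: regularity of $f$.} I will check that $f\in\bigcap_m D(A^m)$. Because $w$ is smooth with compact support in $\Omega$, every $\Delta^j w$ lies in $C_c^\infty(\Omega)\subset D(A)$. Hence $w\in D(A^m)$ for all $m$, which gives $\sum_k\lambda_k^{2m}|w_k|^2<\infty$. Step 1 yields $|f_k|\le 2|\rho(T)|^{-1}\lambda_k|w_k|$ for $k\ge K_0$, so $\sum_k\lambda_k^{2m}|f_k|^2<\infty$ for every $m$. Moreover $f\neq0$, since $b_kf_k=w_k$ for all $k$ (using $w_k=0$ on $Z$) and $w\neq0$.

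\textbf{Step 4: conclusion.} The mild solution then satisfies $u(T)=\sum_k b_kf_k\phi_k=w$ in $L^2(\Omega)$. Since $f\in D(A^m)$ for large $m$, $u(T)$ has a continuous representative by local Sobolev embedding, as in the proof of Lemma~\ref{lem:smoothing}. It agrees a.e. with the continuous function $w$, hence everywhere, and so vanishes on $E$. For the noninjectivity statement, replace $f$ by $\epsilon f$ with $\epsilon>0$ small enough that $\|A^\sigma(\epsilon f)\|_{L^2(\Omega)}\le M$. Then $\epsilon f$ and $0$ are distinct admissible sources with identical observations on $E$.

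The main obstacle is Step 1: the finitely many possible zeros of $b_k$, together with the required lower bound $|b_k|\gtrsim\lambda_k^{-1}$, which is exactly where $\rho(T)\neq0$ and $\rho\in W^{1,\infty}$ enter. The zeros are handled by the finite-codimension choice in Step 2.
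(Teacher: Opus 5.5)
Your proposal is correct and follows the paper's proof essentially step for step. Integration by parts gives $\lambda_k b_k\to\rho(T)\neq0$, so only finitely many multipliers vanish and $|b_k|^{-1}\le C\lambda_k$ for the rest; you then prescribe a nonzero $w\in C_c^\infty$, supported away from $E$ and orthogonal to the finitely many exceptional eigenfunctions, as the terminal state via $f_k=w_k/b_k$, and rescale to meet $\|A^\sigma f\|_{L^2(\Omega)}\le M$. Your explicit check that the continuous representative of $u(\cdot,T)$ is $w$ is a detail the paper leaves implicit, and your aside on finite eigenfunction combinations (which invokes Proposition~\ref{prop:HSI}, whose hypothesis \eqref{eq:haus} is not assumed here) plays no role in the argument.
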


\begin{proof}
Denote the active-terminal multipliers by
\[
b_k^{\prime}:=\int_0^T\rho(t)e^{-\lambda_k(T-t)}\,dt.
\]
Integration by parts, justified by $\rho\in W^{1,\infty}(0,T)$, gives
\[
b_k^{\prime}
=\frac{\rho(T)}{\lambda_k}
-\frac{\rho(0)e^{-T\lambda_k}}{\lambda_k}
-\frac1{\lambda_k}\int_0^T\rho'(t)e^{-\lambda_k(T-t)}\,dt
=\frac{\rho(T)}{\lambda_k}+O(\lambda_k^{-2}).
\]
Thus the set $\mathcal Z:=\{k:b_k^{\prime}=0\}$ is finite, and there is a constant $C$ such that
\[
|b_k^{\prime}|^{-1}\le C\lambda_k
\qquad(k\notin\mathcal Z).
\]
Choose a nonempty ball $B\Subset\Omega\setminus E$. Since $C_c^\infty(B)$ is infinite-dimensional, there exists $0\ne\psi\in C_c^\infty(B)$ such that $(\psi,\phi_k)=0$ for every $k\in\mathcal Z$. Let $\psi_k=(\psi,\phi_k)$ and set
\[
f_k:=
\begin{cases}
\psi_k/b_k^{\prime},&k\notin\mathcal Z,\\
0,&k\in\mathcal Z.
\end{cases}
\]
Because $\psi\in C_c^\infty(\Omega)\subset\bigcap_{m\ge0}D(A^m)$, for each $m\ge0$ we have
\[
\sum_{k\ge1}\lambda_k^{2m}|f_k|^2
\le C^2\sum_{k\ge1}\lambda_k^{2(m+1)}|\psi_k|^2<\infty.
\]
Hence $f=\sum_{k\ge1}f_k\phi_k$ belongs to $\bigcap_{m\ge0}D(A^m)$ and is nonzero. Its terminal state has coefficients $b_k^{\prime}f_k=\psi_k$ for every $k$, so $u(T)=\psi$. Since $\psi$ vanishes on $E$, its terminal observation is zero. Multiplying $f$ by a sufficiently small positive constant places it in any prescribed ball $\|A^\sigma f\|_{L^2}\le M$ without changing the vanishing observation.
\end{proof}

\section{Scattered-data reconstruction and convergence analysis}
\label{sec:numerical}

In this section, we establish sampling estimates and analyze the
convergence of Tikhonov reconstructions from scattered observations
on $E$.
Throughout this section, $d\ge1$, and \eqref{eq:haus},
\eqref{eq:switch-off}, and Assumption~\ref{ass:rho} are in force.
The symbol $\beta$ denotes the Hausdorff content exponent, while
$\sigma>0$ denotes the source regularity and penalty exponent.

\subsection{Scattered-data Tikhonov reconstruction}

For $R\ge\lambda_1$, define
\[
V_R=\operatorname{span}\{\phi_k:\lambda_k\le R\},
\qquad N_R=\dim V_R.
\]
The Weyl counting estimate gives $N_R\le C(1+R)^{d/2}$.
Let $X_n=\{x_{n,1},\ldots,x_{n,n}\}\subset E$ consist of pairwise
distinct deterministic sites, and set
\[
S_nf=\big(u(x_{n,j},T)\big)_{j=1}^n,
\qquad
\|w\|_n^2=\frac1n\sum_{j=1}^n|w_j|^2,
\qquad
(w,z)_n=\frac1n\sum_{j=1}^nw_jz_j.
\]
For a continuous function $v$ on $E$, the notation $\|v\|_n$ refers
to the same norm of its sampled values. We use real-valued sources
and data.
The observed data are
\begin{equation}\label{qu-eq:data}
m_{n,j}=u(x_{n,j},T)+\eta_{n,j},
\qquad j=1,\ldots,n.
\end{equation}
The positive waiting time makes point evaluation bounded on terminal
states, so $S_n$ is well defined on $L^2(\Omega)$.

For $\alpha>0$, the reconstruction is the following Tikhonov minimizer
\begin{equation}\label{eq:discrete-tikhonov}
f_{n,R,\alpha}
=\mathop{\rm argmin}_{g\in V_R}
\left\{\|S_ng-m_n\|_n^2
+\alpha\|A^\sigma g\|_{L^2(\Omega)}^2\right\}.
\end{equation}
The functional is strictly convex and coercive on $V_R$, and hence
has a unique minimizer.

To express the minimization problem in matrix form, write
$g=\sum_{\lambda_k\le R}c_k\phi_k$ and define
\[
(H_{n,R})_{jk}=b_k\phi_k(x_{n,j}).
\]
The coefficient vector of the minimizer is determined by
\begin{equation}\label{eq:sampled-normal}
\begin{aligned}
c_\alpha
&=
\left[
\frac1n H_{n,R}^{\mathsf T}H_{n,R}
+\alpha\operatorname{diag}(\lambda_k^{2\sigma})
\right]^{-1}
\frac1n H_{n,R}^{\mathsf T}m_n,\quad
f_{n,R,\alpha}=\sum_{\lambda_k\le R}(c_\alpha)_k\phi_k,
\end{aligned}
\end{equation}
where the diagonal matrix is indexed by the modes
$\lambda_k\le R$.

Define
\begin{equation}\label{eq:clz-exponent}
q_\sigma=\frac{8(1+\sigma)}{4(1+\sigma)+d},\qquad
p_0=\frac{q_\sigma}{2}
=\frac{4(1+\sigma)}{4(1+\sigma)+d}\in(0,1).
\end{equation}
The parameter update is
\begin{equation}\label{eq:clz-fixed-point}
\alpha^{(j+1)}=\Phi_{n,R}(\alpha^{(j)}),\qquad
\Phi_{n,R}(\alpha)=
\left(
n^{-1/2}
\frac{\|H_{n,R}c_\alpha-m_n\|_n}
{{\|A^\sigma f_{n,R,\alpha}\|_{L^2(\Omega)}}}
\right)^{q_\sigma}.
\end{equation}
Fix a constant $c_0>0$, independent of $n$ and the
observed data. The initial value used in the iteration is
$\alpha^{(0)}=c_0n^{-p_0}$. At each step, \eqref{eq:sampled-normal} determines the regularized
solution, and \eqref{eq:clz-fixed-point} updates the parameter from its residual and penalty norm. 

For data satisfying $H_{n,R}^{\mathsf T}m_n\ne0$, both
$\|H_{n,R}c_\alpha-m_n\|_n$ and
$\|A^\sigma f_{n,R,\alpha}\|_{L^2(\Omega)}$ are positive for
$\alpha>0$, and $\Phi_{n,R}$ is continuous. For a prescribed tolerance $\mathrm{tol}_\alpha>0$,
the iteration is stopped once
\begin{equation*}
|\alpha^{(j+1)}-\alpha^{(j)}|\le\mathrm{tol}_\alpha.
\end{equation*}
If the stopping criterion is not met within the prescribed maximum number of iterations, the last iterate is returned. The output is the selected parameter $\alpha^{(j+1)}$ and the corresponding reconstruction $f_{n,R,\alpha^{(j+1)}}$, computed from \eqref{eq:sampled-normal}.

\subsection{Quasi-uniform sampling}

For the growing-sample analysis, the sites are assumed to be
quasi-uniform relative to the fixed compact set $E\Subset\Omega$. For $n\ge2$, define
\[
h_n=\sup_{x\in E}\min_{1\le j\le n}|x-x_{n,j}|,
\qquad
q_n=\min_{i\ne j}|x_{n,i}-x_{n,j}|,
\]
and assume that $h_n\le Bq_n$, where $B$ is independent of $n$. Recall that
\begin{equation}\label{qu-eq:content}
\mathcal H_\infty^\beta(E)>0,
\qquad d-1<\beta\le d.
\end{equation}

\begin{lemma}[Quasi-uniform sampling under positive Hausdorff content]
\label{qu-lem:sampling}
For each fixed $L>0$, there exist $C_L>0$ and $n_L\ge1$
such that
\begin{equation}\label{qu-eq:sampling}
\|v\|_{L^2(\Omega)}
\le C_L e^{C_L\sqrt R}
(nh_n^\beta)^{1/2}\|v\|_n
\end{equation}
for every $v\in V_R$, $1\le R\le L\log n$, and $n\ge n_L$.
The constants may depend on $E,\beta,\Omega,B,L$, but not on
$R,n,v$.
\end{lemma}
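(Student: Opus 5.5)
The plan is to combine Proposition~\ref{prop:HSI} with a Bernstein-type gradient bound for $V_R$ and a counting argument that uses the Hausdorff content. Using only $\sup_E$ and $\max_j$ would lose a factor $\sqrt n$. Instead I want an averaged quantity, which is why the weight $(nh_n^\beta)^{1/2}$ appears.

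\emph{Step 1 (Bernstein bound).} Fix open $U$ with $E\Subset U\Subset\Omega$ and an integer $\ell$ with $2\ell>d/2+1$. For $v\in V_R$ we have $\|A^jv\|_{L^2}\le R^j\|v\|_{L^2}$. Interior elliptic regularity and Sobolev embedding, as in the proof of Lemma~\ref{lem:smoothing}, then give
\[
G:=\|\nabla v\|_{L^\infty(U)}\le C(1+R)^{\ell}\|v\|_{L^2(\Omega)} .
\]

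\emph{Step 2 (mesh size).} The $q_n$-separated sites lie in a bounded set, so $nq_n^d\le C$. Quasi-uniformity then gives $h_n\le Bq_n\le Cn^{-1/d}$. Hence for $R\le L\log n$,
\[
e^{C\sqrt R}(1+R)^\ell h_n\le Ce^{C\sqrt{L\log n}}(L\log n)^\ell n^{-1/d}\to0 .
\]
This is the source of $n_L$. We also have the lower bound $nh_n^\beta\ge\mathcal H^\beta_\infty(E)>0$, because the balls $B(x_{n,j},h_n)$ cover $E$.

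\emph{Step 3 (HSI on a large subset).} The key input is a version of \eqref{eq:HSI} in which $\sup_E$ is replaced by the supremum over any compact $E'\subset E$ with $\mathcal H^\beta_\infty(E')\ge\frac12\mathcal H^\beta_\infty(E)$, with uniform constants. I expect this to follow from Theorem~2.1 of \cite{Green25}, whose constants depend on $E$ only through a lower bound on its content; this is the step I regard as the main obstacle, since Proposition~\ref{prop:HSI} as stated only covers the fixed set $E$. Fix $t>0$ and set
\[
E'_t=\{x\in E:|v(x)|\le t\},\qquad F_t=E\setminus E'_t .
\]
Choose $t$ minimal, or nearly so, such that $\mathcal H^\beta_\infty(F_t)\le\frac12\mathcal H^\beta_\infty(E)$. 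Subadditivity of the content gives $\mathcal H^\beta_\infty(E'_t)\ge\frac12\mathcal H^\beta_\infty(E)$, and therefore
\[
\|v\|_{L^2}\le Ce^{C\sqrt R}\,t .
\]

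\emph{Step 4 (counting sites).} Take $t'<t$ close to $t$. Then $\mathcal H^\beta_\infty(F_{t'})>\frac12\mathcal H^\beta_\infty(E)$. Each $x\in F_{t'}$ lies within $h_n$ of some site $x_{n,j}$, and $|v(x_{n,j})|\ge t'-h_nG$. Split into two cases.

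\emph{Case $h_nG\le t'/2$.} Then $|v(x_{n,j})|\ge t'/2$ for each such site. Let $K$ be the number of sites with $|v(x_{n,j})|\ge t'/2$. Their balls of radius $h_n$ cover $F_{t'}$, so $Kh_n^\beta\ge\frac12\mathcal H^\beta_\infty(E)$. Consequently
\[
\|v\|_n^2\ge\frac{K}{n}\cdot\frac{t'^2}{4}\ge\frac{c\,t'^2}{nh_n^\beta}.
\]
Letting $t'\to t$ and combining with Step 3 yields \eqref{qu-eq:sampling}.

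\emph{Case $t'<2h_nG$.} Step 3 gives
\[
\|v\|_{L^2}\le Ce^{C\sqrt R}h_nG\le Ce^{C\sqrt R}(1+R)^\ell h_n\|v\|_{L^2}.
\]
By Step 2 the coefficient is below $\frac12$ for $n\ge n_L$, which forces $v=0$.

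All constants depend only on $E,\beta,\Omega,B,L$. If the uniform-content version of HSI in Step 3 cannot be extracted from \cite{Green25}, the fallback is to localize via the covering definition \eqref{eq:content}. One would show that $|v|$ exceeds $t$ only on a set of small content, using propagation of smallness \cite{LogunovMalinnikova18} applied to $v$ extended as a harmonic function in one extra variable.
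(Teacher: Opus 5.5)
Your proof is correct in structure, but it reaches \eqref{qu-eq:sampling} through a different discretization from the paper's.

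\textbf{The paper's route.} It first takes a Frostman measure $\mu$ on $E$ with $\mu(B(x,r))\le C_Fr^\beta$. It then proves the continuous norming inequality $\|v\|_{L^2(\Omega)}\le Ce^{C\sqrt R}\|v\|_{L^2(E,\mu)}$: Chebyshev gives $\mu(\{|v|\le 2\|v\|_{L^2(E,\mu)}\})\ge 3/4$, and the Frostman bound turns large $\mu$-mass into a content lower bound. Finally it discretizes with nearest-neighbour cells of $\mu$-weight at most $C_Fh_n^\beta$ and absorbs the gradient error.

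\textbf{Your route.} You never leave Hausdorff content. The threshold of $t\mapsto\mathcal H^\beta_\infty(F_t)$ plays the role of $2\|v\|_{L^2(E,\mu)}$. The factor $(nh_n^\beta)^{1/2}$ comes from counting the sites whose $h_n$-balls must cover a superlevel set of content $>\frac12\mathcal H^\beta_\infty(E)$. The gradient error is handled by your dichotomy rather than by absorption. This is more elementary, since it needs no Frostman lemma and no cell weights. The paper's detour instead yields an intermediate $L^2(E,\mu)$ Marcinkiewicz--Zygmund-type inequality, which could be reused for other weighted or random designs.

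\textbf{Two small fixes.}
\begin{enumerate}
\item Set $t^*:=\inf\{t\ge0:\mathcal H^\beta_\infty(F_t)\le\frac12\mathcal H^\beta_\infty(E)\}$. Apply Step 3 for every $t>t^*$ and Step 4 for every $t'<t^*$, instead of relying on a (nearly) minimal $t$. Then no continuity of the content is needed.
\item The mean-value step needs $n$ large enough that the segments $[x,x_{n,j}]$ lie in $U$. This holds because $h_n\to0$.
\end{enumerate}

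\textbf{Step 3 must be proved, not cited.} The step you flag is exactly the paper's auxiliary estimate \eqref{qu-eq:uniform-subset}. You should not justify it by asserting how the constants in Theorem~2.1 of \cite{Green25} depend on $E$; Proposition~\ref{prop:HSI} only gives constants depending on $E$ itself. The paper proves it essentially by your fallback:
\begin{enumerate}
\item Cover $E$ by finitely many fixed balls $B_j$, $1\le j\le J$, with $8B_j\Subset\Omega$. By subadditivity, some $E'\cap B_j$ has content at least $\mathcal H^\beta_\infty(E)/(2J)$.
\item Apply the gradient propagation-of-smallness lemma (Lemma~2.3 of \cite{Green25}) in $\mathbb R^{d+1}$ to the harmonic lift $U_v(x,s)=\sum_{\lambda_k\le R}v_k\lambda_k^{-1/2}\sinh(\sqrt{\lambda_k}s)\phi_k(x)$. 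This works because $\beta>(d+1)-2$.
\item The lift satisfies $|\nabla_{x,s}U_v(x,0)|=|v(x)|$ and $\|\nabla U_v\|_{L^\infty}\le Ce^{C\sqrt R}\|v\|_{L^2(\Omega)}$.
\item Combine with the spectral inequality on the fixed ball $2B_j$, then divide by $\|v\|_{L^2(\Omega)}^{1-\kappa_j}$.
\end{enumerate}
Because the ball geometry is fixed and only a lower bound on the content of the subset enters, the constants are uniform over all admissible $E'$. With this filled in, your argument is complete.
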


\begin{proof}
To make the proof easier to follow, we divide the proof into three steps.

\medskip
\noindent
\emph{Step 1. Packing estimate and a Frostman measure.}
The balls $B(x_{n,j},q_n/3)$ are pairwise disjoint and contained
in a fixed bounded enlargement of $E$. Comparing their total
$d$-dimensional volume with that of this enlargement gives
\(nq_n^d\le C.\)
Consequently, the quasi-uniformity assumption implies
\begin{equation}\label{qu-eq:fill-bound}
h_n\le Bq_n\le Cn^{-1/d}.
\end{equation}

By Frostman's lemma
\cite[Theorem~8.8]{Mattila95}, the positive
Hausdorff-content condition \eqref{qu-eq:content}
yields a probability measure $\mu$ supported on $E$
and a constant $C_F>0$ such that
\begin{equation}\label{qu-eq:frostman}
\mu(B(x,r))\le C_Fr^\beta,
\qquad x\in\mathbb R^d,\quad r>0.
\end{equation}
We first prove the measure norming inequality
\begin{equation}\label{qu-eq:munorm}
\|v\|_{L^2(\Omega)}
\le Ce^{C\sqrt R}\|v\|_{L^2(E,\mu)},
\qquad v\in V_R,\quad R\ge1.
\end{equation}

\medskip
\noindent
\emph{Step 2. Uniform spectral inequality for subsets.}
Choose finitely many fixed closed balls
$B_j=\overline B(x_j,r_j)$, $1\le j\le J$, such that
\[
E\subset\bigcup_{j=1}^J B_j,
\qquad 8B_j\Subset\Omega.
\]
Here $8B_j$ denotes the concentric ball with eight times the
radius of $B_j$. Such a cover exists because $E\Subset\Omega$.

We establish the following auxiliary estimate. For every fixed
$m_0>0$, there is a constant $C>0$ such that
\begin{equation}\label{qu-eq:uniform-subset}
\|v\|_{L^2(\Omega)}
\le Ce^{C\sqrt R}\sup_{x\in F}|v(x)|
\end{equation}
for every $R\ge1$, every $v\in V_R$, and every compact set
$F\subset B_j$ satisfying
\[
\mathcal H_\infty^\beta(F)\ge m_0,
\qquad 1\le j\le J.
\]
The constant in \eqref{qu-eq:uniform-subset} is independent of
the particular set $F$.

To prove this estimate, fix one of the balls $B_j$ and a compact
set $F\subset B_j$ with
$\mathcal H_\infty^\beta(F)\ge m_0$.
By the spectral inequality on the fixed ball $2B_j$
(see \cite[Theorem~2.1]{Green25}), there exists $C_j>0$
such that
\begin{equation}\label{qu-eq:fixed-ball}
\|v\|_{L^2(\Omega)}
\le C_j e^{C_j\sqrt R}
\sup_{x\in 2B_j}|v(x)|,
\qquad v\in V_R.
\end{equation}
Here $C_j$ depends only on the fixed geometry, not on $F$.

For $v\in V_R$, write
\(v=\sum_{\lambda_k\le R}v_k\phi_k\)
and introduce the finite harmonic extension
\begin{equation*}
U_v(x,s)
=
\sum_{\lambda_k\le R}
v_k\frac{\sinh(\sqrt{\lambda_k}s)}{\sqrt{\lambda_k}}
\phi_k(x).
\end{equation*}
Since $-\Delta\phi_k=\lambda_k\phi_k$, direct differentiation
shows that
\[
(\Delta_x+\partial_s^2)U_v=0
\qquad\text{in }\Omega\times\mathbb R.
\]
Moreover,
\[
U_v(x,0)=0,\qquad
\nabla_xU_v(x,0)=0,\qquad
\partial_sU_v(x,0)=v(x),
\]
and hence
\begin{equation}\label{qu-eq:lift-trace}
|\nabla_{x,s}U_v(x,0)|=|v(x)|.
\end{equation}

For each fixed $a>0$, orthogonality of the eigenfunctions gives
\begin{align*}
\|U_v\|_{L^2(\Omega\times(-a,a))}^2
&=
\sum_{\lambda_k\le R}|v_k|^2
\int_{-a}^a
\frac{\sinh^2(\sqrt{\lambda_k}s)}{\lambda_k}\,ds
\le C_a e^{2a\sqrt R}
\|v\|_{L^2(\Omega)}^2.
\end{align*}
Here we used $\lambda_k\ge\lambda_1>0$ and
$\lambda_k\le R$ for the modes in the sum.

After translation and rescaling, we apply the gradient estimate
for propagation of smallness \cite[Lemma~2.3]{Green25}.
The ambient dimension is $d+1$, and we choose
\[
\delta_*:=\beta-(d-1)>0,
\]
so that $(d+1)-2+\delta_*=\beta$. For each fixed $j$, we
apply the lemma with the compact sets
\[
K=2B_j\times\{0\},\qquad F\times\{0\},
\]
and concentric balls centered at $(x_j,0)$ with radii
$4r_j$ and $8r_j$, respectively. Since $F\subset B_j$
and $8B_j\Subset\Omega$, the required geometric inclusions
hold, and $U_v$ is harmonic on the larger ball. Moreover,
\[
\mathcal H_\infty^\beta(F\times\{0\})
=\mathcal H_\infty^\beta(F)\ge m_0.
\]
Thus the geometric parameters and content lower bound are
uniform in $F$. Since there are only finitely many balls
$B_j$, the resulting constants $C_j>0$ and
$\kappa_j\in(0,1)$ can be chosen uniformly over all admissible sets $F$.

The propagation estimate consequently yields
\begin{equation}\label{qu-eq:gradient-propagation}
\sup_{x\in 2B_j}|v(x)|
\le C_j
\left(\sup_{x\in F}|v(x)|\right)^{\kappa_j}
\|\nabla U_v\|_{L^\infty(\mathbb B_{4r_j}^{d+1})}^{1-\kappa_j}.
\end{equation}

The interior estimates for harmonic functions, together with
$6B_j\Subset\Omega$, imply
\begin{align}
\|\nabla U_v\|_{L^\infty(\mathbb B_{4r_j}^{d+1})}
&\le C_j
\|U_v\|_{L^2(\mathbb B_{6r_j}^{d+1})}
\nonumber\\
&\le C_j
\|U_v\|_{L^2(\Omega\times(-6r_j,6r_j))}
\nonumber\\
&\le C_j e^{C_j\sqrt R}\|v\|_{L^2(\Omega)}.
\label{qu-eq:lift-gradient-bound}
\end{align}
The fixed geometry of the balls is essential in these estimates.

Applying \eqref{qu-eq:gradient-propagation} and
\eqref{qu-eq:lift-trace}--\eqref{qu-eq:lift-gradient-bound},
we obtain
\[
\sup_{x\in 2B_j}|v(x)|
\le C_j e^{C_j\sqrt R}
\left(\sup_{x\in F}|v(x)|\right)^{\kappa_j}
\|v\|_{L^2(\Omega)}^{1-\kappa_j}.
\]
Combining this with \eqref{qu-eq:fixed-ball} gives
\[
\|v\|_{L^2(\Omega)}
\le C_j e^{C_j\sqrt R}
\left(\sup_{x\in F}|v(x)|\right)^{\kappa_j}
\|v\|_{L^2(\Omega)}^{1-\kappa_j}.
\]
If $v\ne0$, division by
$\|v\|_{L^2(\Omega)}^{1-\kappa_j}$ and raising both sides
to the power $1/\kappa_j$ yield
\[
\|v\|_{L^2(\Omega)}
\le C_j e^{C_j\sqrt R}\sup_{x\in F}|v(x)|,
\]
after changing the constant. The case $v=0$ is immediate.

Although the ball index and the exponent $\kappa_j$ may vary,
there are only finitely many balls. Taking the maximum of the
resulting constants proves \eqref{qu-eq:uniform-subset} uniformly
over all such $F$ and all $j$.

We now apply \eqref{qu-eq:uniform-subset} to prove
\eqref{qu-eq:munorm}. Let $v\in V_R$ be nonzero and put
\[
s_v:=\|v\|_{L^2(E,\mu)},\qquad
G_v:=\{x\in E:|v(x)|\le2s_v\}.
\]
Since $v$ is continuous on the compact set $E$, the set $G_v$
is compact. If $s_v>0$, Chebyshev's inequality gives
\begin{align*}
\mu(E\setminus G_v)
&=\mu\bigl(\{x\in E:|v(x)|>2s_v\}\bigr)
\le \frac{1}{4s_v^2}\int_E|v|^2\,d\mu
=\frac14.
\end{align*}
Thus $\mu(G_v)\ge3/4$. If $s_v=0$, then $v=0$
$\mu$-almost everywhere and $\mu(G_v)=1$.
In either case, \(\mu(G_v)\ge\frac34.\)

Because the balls $B_1,\ldots,B_J$ cover $E$, there exists
$j=j(v)$ such that
\[
\mu(G_v\cap B_j)\ge\frac{3}{4J}.
\]
Set $F_v:=G_v\cap B_j$. Then $F_v$ is compact. For any
countable covering $\{B(x_i,r_i)\}_i$ of $F_v$, we have,
by \eqref{qu-eq:frostman},
\[
\mu(F_v)
\le \sum_i\mu(B(x_i,r_i))
\le C_F\sum_i r_i^\beta.
\]
Taking the infimum over all such coverings, we obtain
\[
\mathcal H_\infty^\beta(F_v)
\ge \frac{\mu(F_v)}{C_F}
\ge \frac{3}{4JC_F}>0.
\]
Since this lower bound is independent of $v$,
\eqref{qu-eq:uniform-subset} applies to $F_v$ with a constant
independent of $v$. Hence,
\[
\|v\|_{L^2(\Omega)}
\le Ce^{C\sqrt R}\sup_{x\in F_v}|v(x)|
\le 2Ce^{C\sqrt R}\|v\|_{L^2(E,\mu)}.
\]
Absorbing the factor $2$ into $C$ proves
\eqref{qu-eq:munorm}. If $s_v=0$, the same argument forces
$v=0$ in $L^2(\Omega)$, so the inequality also holds in that
case. The inequality for $v=0$ is trivial.

\medskip
\noindent
\emph{Step 3. Discretization and absorption of the sampling error.}
Partition $E$ into Borel nearest-neighbor cells
$D_{n,j}$, resolving ties by the smallest index, and set
\[
w_{n,j}:=\mu(D_{n,j}),\qquad 1\le j\le n.
\]
Every cell satisfies
\(D_{n,j}\subset\overline B(x_{n,j},h_n).\)
Therefore, by \eqref{qu-eq:frostman},
\begin{equation}\label{qu-eq:cell-weight}
w_{n,j}\le C_Fh_n^\beta.
\end{equation}

Choose a fixed compact neighborhood $K\Subset\Omega$ of $E$.
Interior spectral elliptic estimates imply that, for some integer
$m\ge0$,
\begin{equation*}
\|\nabla v\|_{L^\infty(K)}
\le C(1+R)^m\|v\|_{L^2(\Omega)},
\qquad v\in V_R.
\end{equation*}
Since $h_n\to0$, for all sufficiently large $n$ the line segment
joining any $x\in D_{n,j}$ to $x_{n,j}$ lies in $K$. The
mean-value estimate then gives
\[
|v(x)-v(x_{n,j})|
\le h_n\|\nabla v\|_{L^\infty(K)},
\qquad x\in D_{n,j}.
\]
Using the triangle inequality in $L^2(E,\mu)$ and
$\mu(E)=1$, we obtain
\begin{align*}
\|v\|_{L^2(E,\mu)}
&\le
\left(\sum_{j=1}^n w_{n,j}|v(x_{n,j})|^2\right)^{1/2}
+h_n\|\nabla v\|_{L^\infty(K)}\\
&\le C(nh_n^\beta)^{1/2}\|v\|_n
+C h_n(1+R)^m\|v\|_{L^2(\Omega)}.
\end{align*}
Here the first term was estimated using \eqref{qu-eq:cell-weight}.

Combining the last estimate with \eqref{qu-eq:munorm} gives
\begin{align}
\|v\|_{L^2(\Omega)}
&\le Ce^{C\sqrt R}(nh_n^\beta)^{1/2}\|v\|_n
+Ch_n(1+R)^m e^{C\sqrt R}
\|v\|_{L^2(\Omega)}.
\label{qu-eq:before-absorption}
\end{align}
By \eqref{qu-eq:fill-bound}, the coefficient of the last term
is bounded uniformly for $1\le R\le L\log n$ by
\[
Cn^{-1/d}(1+L\log n)^m e^{C\sqrt{L\log n}}
\longrightarrow0
\qquad(n\to\infty).
\]
Indeed, the logarithm of this expression is bounded above by
\[
-\frac1d\log n+C\sqrt{L\log n}
+m\log(1+L\log n)+C,
\]
which tends to $-\infty$. Hence the convergence is uniform
over the entire spectral range $1\le R\le L\log n$.

Choose $n_L$ sufficiently large that this coefficient is at
most $1/2$. Absorbing the last term of
\eqref{qu-eq:before-absorption} into the left-hand side and
enlarging the constant yields
\[
\|v\|_{L^2(\Omega)}
\le C_L e^{C_L\sqrt R}
(nh_n^\beta)^{1/2}\|v\|_n.
\]
This proves \eqref{qu-eq:sampling}.
\end{proof}

\subsection{Stochastic convergence analysis}

In this subsection, constants may also depend on the fixed source,
the a priori bound $M$, the noise distribution, and $c_0$, but are
independent of $n$ and the noise realization.

Let $f^\dagger\in D(A^\sigma)\setminus\{0\}$ satisfy
$\|A^\sigma f^\dagger\|_{L^2(\Omega)}\le M$. In the data model
\eqref{qu-eq:data}, assume that all noise variables have the same
distribution, independent of $n$, with mean zero and variance
$\nu^2\in(0,\infty)$. They are independent within each row.

With $p_0$ as in \eqref{eq:clz-exponent}, choose
$\kappa>p_0/(2\tau_0)$ and set
\[
K_n=\kappa\log n,\qquad f_{n,\alpha}=f_{n,K_n,\alpha}.
\]
For all sufficiently large $n$, $K_n\ge\lambda_1$, so $f_{n,\alpha}$
is defined by \eqref{eq:discrete-tikhonov}.

\begin{theorem}[Convergence analysis]
Under \eqref{qu-eq:content}, quasi-uniformity, and
\eqref{eq:multiplier-two-sided}, assume the noise model
\eqref{qu-eq:data} and $p_0>1-\beta/d$.
For any fixed $c_0>0$, independent of $n$ and the observed data,
there exists a constant $C>0$, independent of $n$ and the noise
realization, such that, with probability tending to one, the
iteration \eqref{eq:clz-fixed-point} with $R=K_n$ and
$\alpha^{(0)}=c_0n^{-p_0}$ converges monotonically to a positive
fixed point $\alpha_*$, and the corresponding reconstruction satisfies
\[
\big\|f_{n,\alpha_*}-f^\dagger\big\|_{L^2(\Omega)}
\le C(\log n)^{-\sigma}.
\]
\end{theorem}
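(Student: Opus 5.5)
The plan is to follow the stochastic Tikhonov template of \cite{ChenZhangZou22,CenLiZhang25}, adapted to thin sets via Lemma~\ref{qu-lem:sampling}. Set $g=f_{n,\alpha}-\Pi_{K_n}f^\dagger\in V_{K_n}$. We split the total error as
\[
f_{n,\alpha}-f^\dagger=g-(I-\Pi_{K_n})f^\dagger .
\]
The truncation part satisfies $\|(I-\Pi_{K_n})f^\dagger\|_{L^2(\Omega)}\le MK_n^{-\sigma}\le C(\log n)^{-\sigma}$, which already has the target rate.

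For $g$, we compare the minimizer with the competitor $\Pi_{K_n}f^\dagger$ in \eqref{eq:discrete-tikhonov}. This gives the basic inequality
\[
\|S_ng\|_n^2+\alpha\|A^\sigma g\|^2\le C\bigl(\|S_n(I-\Pi_{K_n})f^\dagger\|_n^2+|(\eta,S_ng)_n|+\alpha M^2\bigr).
\]
Each term is then controlled as follows.
\begin{enumerate}[label=\textup{(\roman*)}]
\item \emph{Terminal high-frequency remainder.} By \eqref{eq:multiplier-two-sided} and Lemma~\ref{lem:smoothing}, applied through the splitting $e^{-\tau_0A}=e^{-\frac{\tau_0}{2}A}e^{-\frac{\tau_0}{2}A}$, we get $\sup_E|u[(I-\Pi_{K_n})f^\dagger](T)|\le CMe^{-\tau_0K_n/2}$. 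Since $e^{-\tau_0K_n/2}=n^{-\tau_0\kappa/2}$, the hypothesis $\kappa>p_0/(2\tau_0)$ makes this $o(n^{-p_0/2})$. I may need the full $e^{-\tau_0K_n}$ here, using a weaker $L^\infty$ bound. This is one point to check.
\item \emph{Noise term.} On the finite space $V_{K_n}$ we have the standard bound $\mathbb E\sup_{g}\frac{(\eta,S_ng)_n^2}{\|S_ng\|_n^2+\alpha\|A^\sigma g\|^2}\le\frac{\nu^2}{n}\operatorname{tr}\bigl(\cdots\bigr)$. This trace is an effective dimension, bounded by $C\alpha^{-d/(4\sigma)}$ or, more crudely, by $N_{K_n}\le C(\log n)^{d/2}$. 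Chebyshev's inequality then gives, with probability tending to one, $\|S_ng\|_n^2+\alpha\|A^\sigma g\|^2\lesssim\alpha+n^{-1}(\log n)^{d/2}\omega_n$ for any slowly divergent $\omega_n$.
\item \emph{Parameter rule.} Next I analyze the map \eqref{eq:clz-fixed-point}. I would show that $\alpha\mapsto\|H c_\alpha-m_n\|_n$ is increasing and $\alpha\mapsto\|A^\sigma f_{n,\alpha}\|$ is decreasing, so $\Phi_{n,K_n}$ is increasing. Monotone convergence of the iterates then reduces to locating a fixed point with $\Phi(\alpha^{(0)})$ on the correct side. With high probability the residual behaves like $\nu$ (noise dominated) and $\|A^\sigma f_{n,\alpha}\|$ stays between two positive constants, using $f^\dagger\ne0$. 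Hence $\Phi(\alpha)\asymp n^{-q_\sigma/2}=n^{-p_0}$, so the iteration stays in the window $[c_1n^{-p_0},c_2n^{-p_0}]$ and converges monotonically to a fixed point $\alpha_*\asymp n^{-p_0}$. In particular $\|A^\sigma g\|\le C$ at $\alpha_*$.
\end{enumerate}

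Finally, Lemma~\ref{qu-lem:sampling} with $L=\kappa$, together with \eqref{qu-eq:fill-bound}, gives $nh_n^\beta\le Cn^{1-\beta/d}$. For $v=\Pi_\Lambda g$ (whose terminal state is $e^{-\tau_0A}$ of something), I would argue as in the proof of Theorem~\ref{thm:log}. I would not apply the sampling inequality to $g$ directly, but to $y_g:=\sum b_kg_k\phi_k\in V_{K_n}$, so that $S_ng$ is the sampled $y_g$. This yields
\[
\|y_g\|_{L^2}\le Ce^{C\sqrt{K_n}}n^{(1-\beta/d)/2}\|S_ng\|_n\lesssim e^{C\sqrt{\log n}}n^{(1-\beta/d-p_0)/2}.
\]
Because $p_0>1-\beta/d$, this is a negative power of $n$ up to subexponential factors, namely $\|y_g\|\le n^{-\epsilon}$. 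Then \eqref{eq:source-split} with the prior $\|A^\sigma g\|\le C$ and $\Lambda=c\log n$ gives $\|g\|\le C(\log n)^{-\sigma}$.

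The main obstacle is step (iii): proving the fixed-point localization and monotonicity with high probability. This requires two-sided control of the residual by the law of large numbers for $\|\eta\|_n^2\to\nu^2$, together with the fact that the projection of the noise onto the $N_{K_n}$-dimensional range is negligible. It also requires a lower bound on $\|A^\sigma f_{n,\alpha}\|$ from the consistency of the reconstruction itself, which makes the argument somewhat circular and must be done on a fixed window of $\alpha$.
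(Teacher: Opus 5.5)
Your architecture matches the paper's: the basic inequality against $\Pi_{K_n}f^\dagger$, the projected-noise bound $\mathbb E\|P_n\eta_n\|_n^2=\nu^2N_{K_n}/n$ (the crude bound is all that is needed), monotonicity of residual and penalty in $\alpha$, and a window $\asymp n^{-p_0}$ on which uniform consistency gives two-sided bounds on $\Phi_{n,K_n}$. The circularity you worry about in (iii) is resolved as you suggest. On \emph{any} fixed window $[c_-n^{-p_0},c_+n^{-p_0}]$ one gets $a_0n^{-p_0}\le\Phi_{n,K_n}\le b_0n^{-p_0}$, where $a_0,b_0$ depend only on $\nu$, $M$, $\lambda_1$, $\|f^\dagger\|_2$ and not on $c_\pm$. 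You may then take $c_-\le\min\{c_0,a_0\}$ and $c_+\ge\max\{c_0,b_0\}$ to obtain an invariant window containing $\alpha^{(0)}$.

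The point you flagged in (i) is a genuine failure as written. The bound $e^{-\tau_0K_n/2}=n^{-\tau_0\kappa/2}$ is $o(n^{-p_0/2})$ only if $\kappa>p_0/\tau_0$. Under the stated hypothesis $\kappa>p_0/(2\tau_0)$, the half-split yields only $\|\zeta_n\|_n^2=O(n^{-\tau_0\kappa})$, which can be much larger than $n^{-p_0}$. The penalty bound $\|A^\sigma f_{n,\alpha}\|_2^2\le M^2+C\alpha^{-1}(\|\zeta_n\|_n^2+\|P_n\eta_n\|_n^2)$ then blows up at $\alpha\asymp n^{-p_0}$. This destroys both the prior $\|A^\sigma g\|\le C$ used in your final step and the lower bound $a_0$ on $\Phi_{n,K_n}$.

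The fix, which the paper uses, loses only a polynomial factor. Write $(I-\Pi_{K_n})u(T)=e^{-sA}e^{-(\tau_0-s)A}F(A)(I-\Pi_{K_n})f^\dagger$ with $s=1/K_n$, and use $\|e^{-sA}\|_{L^2\to L^\infty}\le Cs^{-d/4}$. This gives $\sup_E|(I-\Pi_{K_n})u(T)|\le CK_n^{d/4-1}e^{-\tau_0K_n}\|(I-\Pi_{K_n})f^\dagger\|_2$, and hence $\|\zeta_n\|_n^2=o(n^{-p_0})$ precisely because $2\tau_0\kappa>p_0$.

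Your transfer step is genuinely different from the paper's, and a bit leaner. You apply Lemma~\ref{qu-lem:sampling} at $R=K_n$ to $y_g=u_g(\cdot,T)\in V_{K_n}$, whose samples are exactly $S_ng$, and you pay the truncation error $MK_n^{-\sigma}$ separately. The paper instead works with the full error $h=f_{n,\alpha}-f^\dagger$, whose terminal state is not band-limited. It therefore samples $\Pi_{R_n}u_h(\cdot,T)$ with a second cutoff $R_n=v\log n$, where $v>u+(1-\beta/d)/(2\tau_0)$. It then invokes the tail estimate a second time to control $(I-\Pi_{R_n})u_h(\cdot,T)$ on $E$.

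Both routes rest on the same prediction bound, and both lead to the condition $p_0>1-\beta/d$ and the rate $(\log n)^{-\sigma}$. Yours avoids the auxiliary parameter $v$ and the second tail estimate. Your bounds hold simultaneously for all $\alpha$ in the window on a single event, which is what justifies evaluating them at the random $\alpha_*$.
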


\begin{proof}
The proof consists of three steps. We first derive uniform
prediction and regularity estimates for the finite-dimensional
Tikhonov reconstructions. We then use the sampling inequality
to convert the prediction error into a source reconstruction
error. Finally, we show that the fixed-point parameter update
preserves a suitable interval and converges to a positive fixed
point within that interval.

\medskip
\noindent
\emph{Step 1: terminal remainder and stochastic prediction estimates.}
Let $f_n^0=\Pi_{K_n}f^\dagger$, and let
\[
\zeta_n=S_n(I-\Pi_{K_n})f^\dagger.
\]
We first estimate the contribution of the source components
above the spectral cutoff.
To this end, write
\[
u_v(\cdot,T)=e^{-\tau_0 A}F(A)v,\qquad
F(A)=\int_0^{T_0}\rho(t)e^{-(T_0-t)A}\,dt,
\qquad |F(\lambda)|\le\|\rho\|_{L^\infty}/\lambda.
\]
For $v\in L^2(\Omega)$ and $R\ge R_{\rm tail}:=\max\{1,2/\tau_0\}$,
put $s=1/R<\tau_0$. Since the spectral factors commute,
\[
(I-\Pi_R)u_v(\cdot,T)
=e^{-sA}e^{-(\tau_0-s)A}F(A)(I-\Pi_R)v.
\]
The heat-semigroup estimate
$\|e^{-sA}\|_{L^2(\Omega)\to L^\infty(\Omega)}\le Cs^{-d/4}$
and the spectral theorem give
\begin{align*}
\sup_{x\in E}\big|(I-\Pi_R)u_v(x,T)\big|
&\le C R^{d/4}
\sup_{\lambda>R}\bigl(|F(\lambda)|e^{-(\tau_0-1/R)\lambda}\bigr)
\|(I-\Pi_R)v\|_2\\
&\le C R^{d/4-1}e^{-(\tau_0-1/R)R}\|(I-\Pi_R)v\|_2.
\end{align*}
Since $e^{-(\tau_0-1/R)R}=e\,e^{-\tau_0R}$, taking
$p=\max\{d/4-1,0\}$ and absorbing $e$ into the constant yields
\begin{equation}\label{qu-eq:tail}
\sup_{x\in E}\big|(I-\Pi_R)u_v(x,T)\big|
\le C(1+R)^p e^{-\tau_0R}\|(I-\Pi_R)v\|_2.
\end{equation}
Applying \eqref{qu-eq:tail} with $R=K_n$ and $v=f^\dagger$, and using
$\|(I-\Pi_{K_n})f^\dagger\|_2\le M K_n^{-\sigma}$, we obtain
\[
\|\zeta_n\|_n
\le C M(1+K_n)^p e^{-\tau_0K_n}K_n^{-\sigma}.
\]
Thus $\|\zeta_n\|_n^2=o(n^{-p_0})$, since $2\tau_0\kappa>p_0$.

Let $P_n$ denote the Euclidean orthogonal projection onto
$S_n(V_{K_n})$. Since $(\cdot,\cdot)_n$ is the Euclidean
inner product multiplied by $1/n$, $P_n$ is also the orthogonal
projection with respect to $(\cdot,\cdot)_n$.
For $g\in V_{K_n}$, the terminal state
$u_g(\cdot,T)$ also belongs to $V_{K_n}$. Since the terminal
multipliers are nonzero, Lemma~\ref{qu-lem:sampling} implies
that $S_n|_{V_{K_n}}$ is injective for large $n$. Thus
$\dim S_n(V_{K_n})=N_{K_n}=O((\log n)^{d/2})$.

Set $g_{n,\alpha}=S_n(f_{n,\alpha}-f_n^0)$. Minimality against
$f_n^0$ and $S_n f_n^0-m_n=-(\zeta_n+\eta_n)$ give
\[
\norm{g_{n,\alpha}}_n^2+\alpha\norm{A^\sigma f_{n,\alpha}}_2^2
\le 2( g_{n,\alpha},\zeta_n+\eta_n)_n
+\alpha M^2.
\]
Because $g_{n,\alpha}$ lies in ${S_n(V_{K_n})}$,
we have $( g_{n,\alpha},\eta_n)_n
=( g_{n,\alpha},P_n\eta_n)_n$. Consequently,
\[
2( g_{n,\alpha},\zeta_n+\eta_n)_n
\le \frac12\|g_{n,\alpha}\|_n^2
+4\bigl(\|\zeta_n\|_n^2+\|P_n\eta_n\|_n^2\bigr).
\]
Substitution yields, simultaneously for every $\alpha>0$,
\begin{align}
\norm{g_{n,\alpha}}_n^2
&\le C\Big(\alpha M^2+{\|\zeta_n\|_n^2}
+\norm{P_n\eta_n}_n^2\Big),
\label{qu-eq:prediction}\\
\norm{A^\sigma f_{n,\alpha}}_2^2
&\le M^2+\frac C\alpha
\Big({\|\zeta_n\|_n^2}+\norm{P_n\eta_n}_n^2\Big).
\label{qu-eq:normbound}
\end{align}

Independence and equal noise variance imply the exact identity
\[
\mathbb E\norm{P_n\eta_n}_n^2
=\frac{\nu^2}{n}\operatorname{tr}(P_n)
=\nu^2\frac{N_{K_n}}n
=O((\log n)^{d/2}/n).
\]
Since $p_0<1$, Markov's inequality gives
$\|P_n\eta_n\|_n^2=o_{\mathbb P}(n^{-p_0})$.
Together with the bound on $\zeta_n$, equations
\eqref{qu-eq:prediction}--\eqref{qu-eq:normbound} imply that,
uniformly for $\alpha$ in any fixed interval
$[c n^{-p_0},C_0n^{-p_0}]$, where $0<c<C_0<\infty$,
\begin{equation}\label{qu-eq:prediction-rate}
\norm{{S_n}(f_{n,\alpha}-f^\dagger)}_n
=O_{\mathbb P}(n^{-p_0/2}),\quad
\norm{A^\sigma f_{n,\alpha}}_2\le M+o_{\mathbb P}(1).
\end{equation}
The first estimate uses
$S_n(f_{n,\alpha}-f^\dagger)=g_{n,\alpha}-\zeta_n$.

\medskip
\noindent
\emph{Step 2: transfer of the prediction error to the source error.}
Fix arbitrary constants $0<c_-<c_+<\infty$, and consider the
parameter interval
\[
I_n=[c_-n^{-p_0},c_+n^{-p_0}].
\]
We establish a source error estimate uniformly over this interval.
Let $h=f_{n,\alpha}-f^\dagger$ and write
$\delta_n=\norm{{S_n}h}_n$.
On events of probability tending to one,
$\norm{A^\sigma h}_2\le 2M+1$ uniformly in the parameter interval.
Choose
\[
0<u<\frac{p_0-(1-\beta/d)}{2\tau_0},\qquad
v>u+\frac{1-\beta/d}{2\tau_0},
\qquad
U_n=u\log n,\quad R_n=v\log n.
\]
For $n$ large, $1\ll U_n\le R_n$.
Lemma~\ref{qu-lem:sampling} applies at $R_n$ with any fixed
$L\ge v$. The multiplier lower bound
\eqref{eq:multiplier-two-sided}, the spectral inequality
\eqref{qu-eq:sampling} applied to
$\Pi_{R_n}{u_h(\cdot,T)}$, and the remainder estimate
\eqref{qu-eq:tail} yield
\begin{equation}\label{qu-eq:source-transfer}
\begin{aligned}
\norm h_2
&\le \norm{\Pi_{U_n}h}_2+\norm{(I-\Pi_{U_n})h}_2\\
&\le C(1+U_n)^ae^{\tau_0 U_n}
\norm{\Pi_{U_n}{u_h(\cdot,T)}}_2
+(2M+1)U_n^{-\sigma}\\
&\le C(1+U_n)^a e^{\tau_0 U_n+C\sqrt{R_n}}
(nh_n^\beta)^{1/2}
\left[\delta_n+C(1+R_n)^p e^{-\tau_0 R_n}\norm h_2\right]
+(2M+1)U_n^{-\sigma}.
\end{aligned}
\end{equation}
The $L^2$-norm of $h$ in the square brackets is bounded by
$\lambda_1^{-\sigma}(2M+1)$ on the same events.
The first square-bracket term is $O_{\mathbb P}(n^{-p_0/2})$
by \eqref{qu-eq:prediction-rate}; the second is deterministic up
to an $O_{\mathbb P}(1)$ factor. The packing bound
$h_n\le Cn^{-1/d}$ gives
\[
(nh_n^\beta)^{1/2}
\le Cn^{(1-\beta/d)/2}.
\]
Using \(p_0>1-\frac{\beta}{d}\), the choices of $u,v$ imply
\[
\frac{1-\beta/d}{2}+\tau_0 u-\frac{p_0}{2}<0,\qquad
\frac{1-\beta/d}{2}+\tau_0 u-\tau_0 v<0.
\]
Since $e^{C\sqrt{v\log n}}=n^{o(1)}$, both exponentially small
contributions are $o_{\mathbb P}((\log n)^{-\sigma})$.
Therefore,
\begin{equation}\label{qu-eq:uniform}
\sup_{\alpha\in I_n}
\Big\|f_{n,\alpha}-f^\dagger\Big\|_{L^2(\Omega)}
=O_{\mathbb P}((\log n)^{-\sigma}).
\end{equation}

\medskip
\noindent
\emph{Step 3: construction of an invariant interval and convergence
of the parameter iteration.}
We now use the uniform estimates from the first two steps to
construct an interval that contains the initial parameter and is
invariant under the fixed-point map. For any fixed interval
$I_n=[c_-n^{-p_0},c_+n^{-p_0}]$,
\eqref{qu-eq:uniform}, together with the assumption
$f^\dagger\ne0$ and the bound \eqref{qu-eq:normbound}, implies that,
with probability tending to one,
\[
\tfrac12\lambda_1^\sigma\|f^\dagger\|_2\le
\|A^\sigma f_{n,\alpha}\|_2
\le M+1,
\qquad \alpha\in I_n.
\]
Indeed, the lower bound follows from
$\|A^\sigma f_{n,\alpha}\|_2\ge\lambda_1^\sigma
\|f_{n,\alpha}\|_2$ and uniform source convergence; the upper bound
follows from \eqref{qu-eq:normbound}.
Moreover,
\[
\sup_{\alpha\in I_n}
\left|\|S_nf_{n,\alpha}-m_n\|_n-\|\eta_n\|_n\right|
\le\sup_{\alpha\in I_n}\|S_n(f_{n,\alpha}-f^\dagger)\|_n
=o_{\mathbb P}(1).
\]
The weak law of large numbers gives
$\|\eta_n\|_n\to\nu>0$ in probability. Hence, with probability
tending to one, the residual lies between $\nu/2$ and $2\nu$,
uniformly on every fixed interval of this form. It follows that
\[
a_0n^{-p_0}
\le\Phi_{n,K_n}(\alpha)
\le b_0n^{-p_0},\quad
a_0:=\left(\frac{\nu}{2M+2}\right)^{q_\sigma},
\quad
b_0:=\left(\frac{4\nu}{\lambda_1^\sigma\|f^\dagger\|_2}\right)^{q_\sigma}.
\]

Choose
\[
c_*:=\min\{1/2,c_0/2,a_0/2\},\qquad
C_*:=\max\{2,2c_0,2b_0\},\qquad
I_n^*:=[c_*n^{-p_0},C_*n^{-p_0}].
\]
Since $c_*$ and $C_*$ are fixed positive constants independent of
$n$ and the observed data, the estimates established above apply
to $I_n^*$ as well. Consequently, on events whose probabilities
tend to one,
\[
\alpha^{(0)}=c_0n^{-p_0}\in I_n^*,\qquad
\Phi_{n,K_n}(I_n^*)
\subset[a_0n^{-p_0},b_0n^{-p_0}]\subset I_n^*.
\]
Moreover, the denominator in
$\Phi_{n,K_n}$ is bounded below by
$\tfrac12\lambda_1^\sigma\|f^\dagger\|_2$ throughout $I_n^*$,
so $\Phi_{n,K_n}$ is continuous there.

For $0<\alpha_1<\alpha_2$, comparing the two Tikhonov minimizers
in their respective functionals gives
\[
(\alpha_2-\alpha_1)
\bigl(\|A^\sigma f_{n,\alpha_2}\|_2^2
-\|A^\sigma f_{n,\alpha_1}\|_2^2\bigr)\le0,
\]
and
\begin{align*}
\|S_nf_{n,\alpha_1}-m_n\|_n^2
-\|S_nf_{n,\alpha_2}-m_n\|_n^2
\le\alpha_1
\bigl(\|A^\sigma f_{n,\alpha_2}\|_2^2
-\|A^\sigma f_{n,\alpha_1}\|_2^2\bigr)\le0.
\end{align*}
Thus the penalty norm is nonincreasing and the residual norm is
nondecreasing in $\alpha$. Since the penalty norm is positive on
$I_n^*$, the parameter map $\Phi_{n,K_n}$ is nondecreasing there.

Since $\Phi_{n,K_n}$ maps $I_n^*$ into itself,
all iterates remain in this interval.
If $\alpha^{(1)}\ge\alpha^{(0)}$, induction shows
that the sequence is nondecreasing; otherwise it is nonincreasing.
Being bounded, it converges to some $\alpha_*\in I_n^*$,
and continuity yields
$\alpha_*=\Phi_{n,K_n}(\alpha_*)$.
The finite-dimensional normal equations also imply
$f_{n,\alpha^{(j)}}\to f_{n,\alpha_*}$ as $j\to\infty$.

Finally, apply \eqref{qu-eq:source-transfer} to
$\alpha\in I_n^*$. By \eqref{qu-eq:prediction-rate} and the
choices of $u$ and $v$, the term containing the square brackets
is $o_{\mathbb P}((\log n)^{-\sigma})$ uniformly on $I_n^*$,
whereas
\[
(2M+1)U_n^{-\sigma}
=(2M+1)u^{-\sigma}(\log n)^{-\sigma}.
\]
Since \eqref{qu-eq:source-transfer} holds uniformly on $I_n^*$
with probability tending to one, it follows that, for a
sufficiently large constant $C>0$,
\[
\mathbb P\left(
\sup_{\alpha\in I_n^*}
\big\|f_{n,\alpha}-f^\dagger\big\|_{L^2(\Omega)}
\le C(\log n)^{-\sigma}
\right)\longrightarrow1.
\]
We have also shown that, with probability tending to one,
all iterates remain in $I_n^*$ and converge monotonically to
$\alpha_*\in I_n^*$. Thus, with probability tending to one,
the iteration converges and
\[
\big\|f_{n,\alpha_*}-f^\dagger\big\|_{L^2(\Omega)}
\le C(\log n)^{-\sigma}
\]
for a sufficiently large constant $C>0$, independent of $n$
and the noise realization. This completes the proof.
\end{proof}

\section{Numerical experiments}
\label{sec:experiments}

We first describe the spectral implementation, then present
numerical results on discrete sampling stability and the effects
of waiting time and noise on source reconstruction.

\subsection{Spectral implementation}

We consider the heat equation on the square domain
$\Omega=(0,1)^2$, with homogeneous Dirichlet boundary conditions,
zero initial value, and $\rho(t)=\mathbf 1_{[0,T_0]}(t)$, where
$T_0=0.5$. The observation time is $T=T_0+\tau_0$ for each chosen
source-free waiting time $\tau_0$.

The Dirichlet eigenfunctions and eigenvalues are
\[
\phi_{jk}(x_1,x_2)=2\sin(j\pi x_1)\sin(k\pi x_2),
\qquad \lambda_{jk}=\pi^2(j^2+k^2),\quad j,k\ge1.
\]

For $q>2$, let $\mathcal C_q\subset[0,1]$ be the two-branch Cantor
set obtained by retaining two subintervals of relative length $1/q$
at each stage. We use the observation set
\begin{equation*}
E_q=[\tfrac14,\tfrac34]\times
\bigl(\tfrac14+\tfrac12\mathcal C_q\bigr)
\Subset\Omega,
\qquad
\dim_H E_q=1+\frac{\log2}{\log q}>1.
\end{equation*}
The set $E_q$ has zero two-dimensional Lebesgue measure and positive
$\beta$-dimensional Hausdorff content for $1<\beta\le\dim_H E_q$. Thus, for $q>2$,
the set $E_q$ satisfies the theoretical observation-set assumption
$\mathcal H_\infty^\beta(E_q)>0$ for every
$1<\beta\le\dim_H E_q<2$.
For the numerical experiments, we fix $q=3$ and use
$E_3$ as the limiting observation set.

For the numerical experiments, we construct a finite candidate pool
using a depth-seven Cantor approximation. Let
$\mathcal M_3^{(7)}$ consist of the midpoints of the $2^7$ retained
intervals, and let $X_1^{(17)}$ consist of the $17$ equally spaced
points in $[1/4,3/4]$. The candidate pool is
\[
X_{\mathrm{cand}}^{(7)}
=\left\{\left(x_i,\frac14+\frac12 y\right):
x_i\in X_1^{(17)},\ y\in\mathcal M_3^{(7)}\right\},
\qquad |X_{\mathrm{cand}}^{(7)}|=17\cdot2^7=2176.
\]
Sensors are selected from this pool by farthest-point sampling.
The candidate list is ordered lexicographically, the greedy ordering
starts from its first point, and ties are resolved by the candidate
order. The same deterministic ordering is used for all sampling
budgets, so smaller sensor sets are nested in larger ones.
The finite pool supports the sampling budgets considered here;
asymptotic claims as $n\to\infty$ require refining the candidate pool.

\begin{figure}[!htb]
\centering
\subfigure[$n=50$]
{\includegraphics[width=0.46\textwidth]{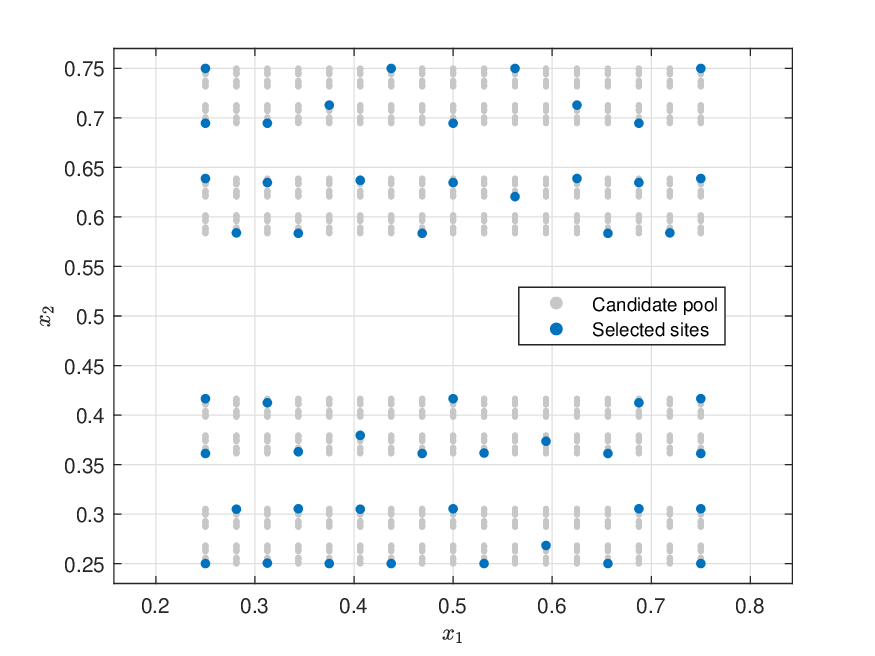}}
\hfill
\subfigure[$n=100$]{\includegraphics[width=0.46\textwidth]{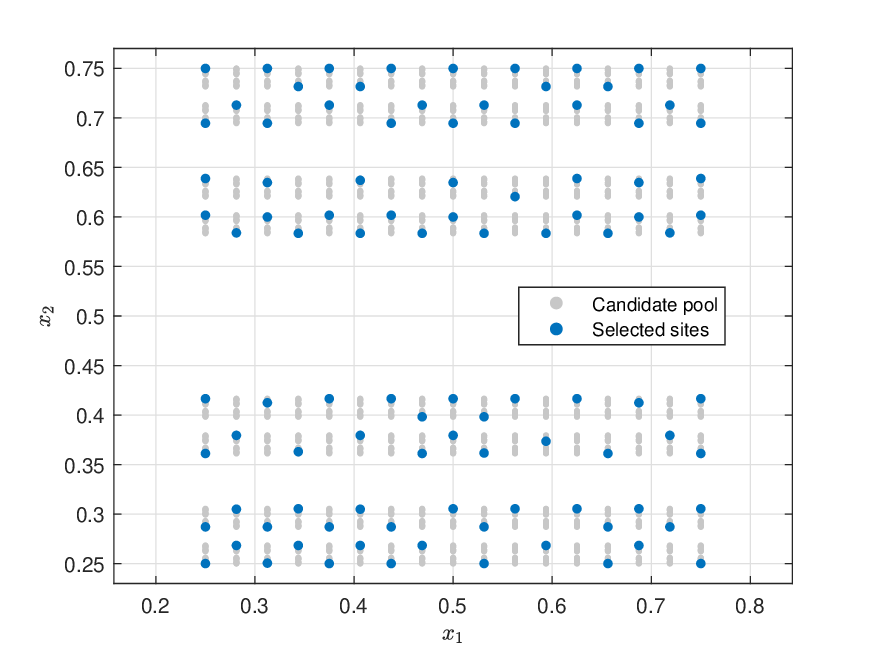}}

\subfigure[$n=200$]{\includegraphics[width=0.46\textwidth]{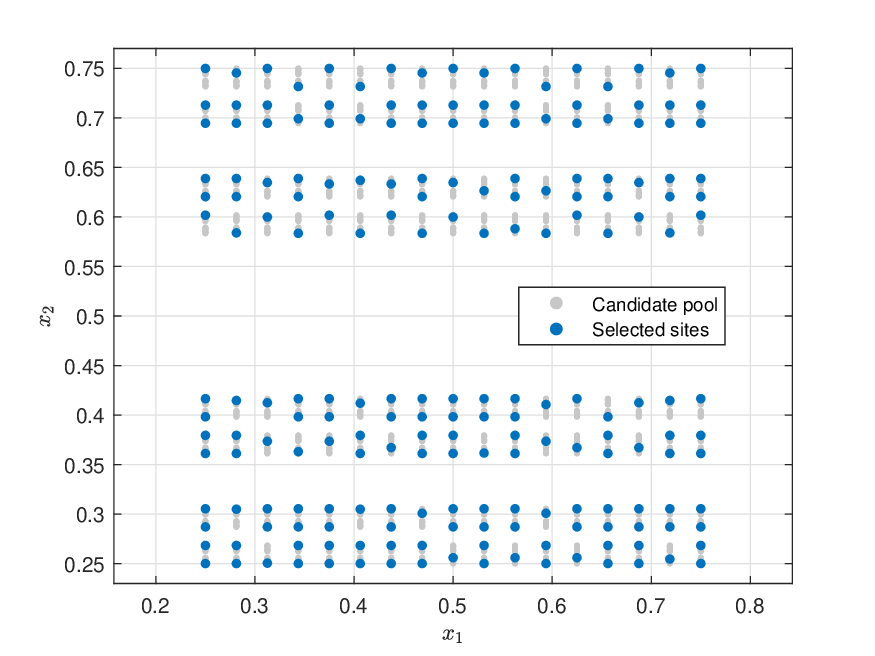}}
\hfill
\subfigure[$n=400$]
{\includegraphics[width=0.46\textwidth]{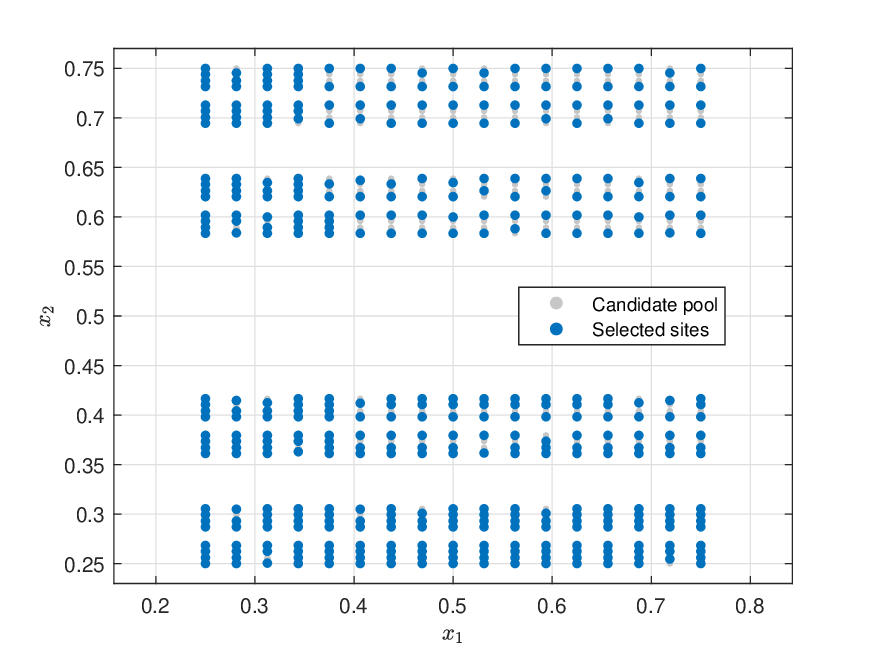}}
\caption{Sampling point distributions on the discrete approximation
of $E_3$ for different sampling budgets.}
\label{fig:sampling-E3}
\end{figure}

Figure~\ref{fig:sampling-E3} displays the sampling designs for
$n=50,100,200,$ and $400$. The point sets are nested,
because the same farthest-point ordering is used for all four sampling
budgets. As $n$ increases, more points are selected from the finite
candidate pool, giving progressively finer coverage of that pool while
keeping all sensors on the discrete approximation of $E_3$. Thus, the
sampling locations follow the Cantor-type observation geometry.

All reconstructions use the spectral Tikhonov functional
\eqref{eq:discrete-tikhonov} with $\sigma=1$ and the fixed-point map
\eqref{eq:clz-fixed-point}, where $q_\sigma=8/5$ and $p_0=4/5$. For each waiting time, we initialize the iteration with
\[
\alpha^{(0)}=c_0n^{-p_0},\qquad
c_0=\frac{|b_1|^2}{\lambda_1^{2\sigma}},
\]
where
\[
b_1=\frac{1-e^{-T_0\lambda_1}}{\lambda_1}
e^{-\tau_0\lambda_1},
\qquad \lambda_1=2\pi^2.
\]
The constant $c_0$ is determined by the known model parameters
and is independent of $n$ and the observed data.

At each iteration, we solve \eqref{eq:sampled-normal} with
$\alpha=\alpha^{(j)}$ and update the parameter using
\eqref{eq:clz-fixed-point}. The iteration stops when
\[
|\alpha^{(j+1)}-\alpha^{(j)}|\le10^{-12},
\]
with at most $500$ iterations.

We next consider two source profiles on $\Omega=(0,1)^2$. The first is the
smooth source
\[
f_{\mathrm{sm}}(x_1,x_2)=x_1(1-x_1)\sin(2\pi x_2).
\]
The second is a radial nonsmooth source centered at
$x_c=(1/2,1/2)$. Let $r=|x-x_c|$. We define
\[
f_{\mathrm{ns}}(x)=
\begin{cases}
1,&r\le R_0,\\[3pt]
(1-s^2)^{8/5},&R_0<r<R,\\[3pt]
0,&r\ge R,
\end{cases}
\qquad
s=\frac{r-R_0}{R-R_0},
\qquad R_0=0.2,\quad R=0.5.
\]
The first radial derivative matches continuously at both interfaces, and
its weak second derivatives belong to $L^2(\Omega)$. The profile also
vanishes on $\partial\Omega$; hence
\[
f_{\mathrm{ns}}\in H^2(\Omega)\cap H_0^1(\Omega)=D(A)
\]
for the square Dirichlet Laplacian. Thus this source is compatible with
the spectral prior used in the experiments, namely $\sigma=1$.

For either profile $f^\dagger\in\{f_{\mathrm{sm}},f_{\mathrm{ns}}\}$,
the terminal observation is represented by
\[
u(x,T)=\sum_{k=1}^{\infty}
b_k(\tau_0)(f^\dagger,\phi_k)\phi_k(x).
\]
For the chosen temporal profile, the multiplier is explicitly
\[
b_k(\tau_0)=\frac{1-e^{-T_0\lambda_k}}{\lambda_k}
e^{-\tau_0\lambda_k}.
\]

For both sources, the spectral coefficients are evaluated using
Gauss--Legendre quadrature with $600$ points in each coordinate.
The forward approximation retains all modes with $1\le j,k\le60$.

\subsection{Numerical reconstruction}

We conduct two experiments in this subsection. Experiment A examines discrete sampling
stability on a finite approximation of $E_3$, while Experiment B
investigates the effects of waiting time and noise on source
reconstruction.

\subsubsection*{Experiment A: Discrete sampling stability on a finite approximation of $E_3$}

We evaluate the discrete spectral sampling map on the finite candidate
set approximating $E_3$. For each $N\in\{6,10,15,20\}$, we use a
complete spectral cutoff
\[
R_6=10\pi^2,\qquad R_{10}=17\pi^2,\qquad
R_{15}=25\pi^2,\qquad R_{20}=32\pi^2,
\]
and define
\[
V_{R_N}:=\operatorname{span}\{\phi_k:\lambda_k\le R_N\}
=\operatorname{span}\{\phi_1,\ldots,\phi_N\}.
\]
The eigenfunctions are ordered by nondecreasing eigenvalue, with ties
resolved lexicographically by the index pair. Each cutoff includes every
mode at its terminal eigenvalue level, so $\dim V_{R_N}=N$ for all four
values of $N$. We independently vary the number $n\in\{50,100,200,400\}$
of sampling points and form
\begin{equation*}
\Phi_{N,n}=n^{-1/2}(\phi_k(x_{n,j}))_{\substack{1\le j\le n\\1\le k\le N}}.
\end{equation*}
For $v=\sum_{k=1}^N c_k\phi_k$, orthonormality gives
\[
\|v\|_{L^2(\Omega)}=\|c\|_2,
\qquad \|v\|_n=\|\Phi_{N,n}c\|_2.
\]
Whenever $\Phi_{N,n}$ has full column rank, its sharp numerical one-sided
sampling constant is
\begin{equation*}
C_{N,n}=\frac{1}{\sigma_{\min}(\Phi_{N,n})},
\end{equation*}
and we also compute
\[
\kappa(\Phi_{N,n})=\frac{\sigma_{\max}(\Phi_{N,n})}
{\sigma_{\min}(\Phi_{N,n})}.
\]
All tested configurations satisfy $n\ge N$. The constants are finite-dimensional diagnostics for the selected deterministic designs; they do not
by themselves verify the general sampling lemma or establish a uniform
asymptotic bound for farthest-point sampling.

\begin{table}[!htb]
\centering
\caption{Discrete sampling stability on a finite approximation of $E_3$:
computed one-sided sampling constants $C_{N,n}$ and condition numbers
$\kappa(\Phi_{N,n})$ for spectral dimension $N$ and sensor count $n$.}
\label{tab:experiment-A}
\setlength{\tabcolsep}{5pt}
\small
\begin{tabular}{c cc cc cc cc}
\toprule
\multirow{2}{*}{$N$}
& \multicolumn{2}{c}{$n=50$}
& \multicolumn{2}{c}{$n=100$}
& \multicolumn{2}{c}{$n=200$}
& \multicolumn{2}{c}{$n=400$} \\
\cmidrule(lr){2-3}\cmidrule(lr){4-5}
\cmidrule(lr){6-7}\cmidrule(lr){8-9}
& $C_{N,n}$ & $\kappa(\Phi_{N,n})$
& $C_{N,n}$ & $\kappa(\Phi_{N,n})$
& $C_{N,n}$ & $\kappa(\Phi_{N,n})$
& $C_{N,n}$ & $\kappa(\Phi_{N,n})$ \\
\midrule
$6$
& {1.1609} & {1.8292}
& {1.0853} & {1.7662}
& {1.1164} & {1.8687}
& {1.1503} & {1.9581} \\

$10$
& {1.6346} & {3.0603}
& {1.7302} & {3.2621}
& {1.7725} & {3.4829}
& {1.8174} & {3.5893} \\

$15$
& {2.5921} & {4.9834}
& {2.8308} & {5.5280}
& {3.0467} & {6.2027}
& {3.0683} & {6.2339} \\

$20$
& {3.9722} & {8.0589}
& {4.6056} & {9.1540}
& {4.9401} & {10.2574}
& {5.2893} & {11.2812} \\
\bottomrule
\end{tabular}
\end{table}

\begin{figure}[!htb]
\centering
\includegraphics[width=0.48\textwidth]{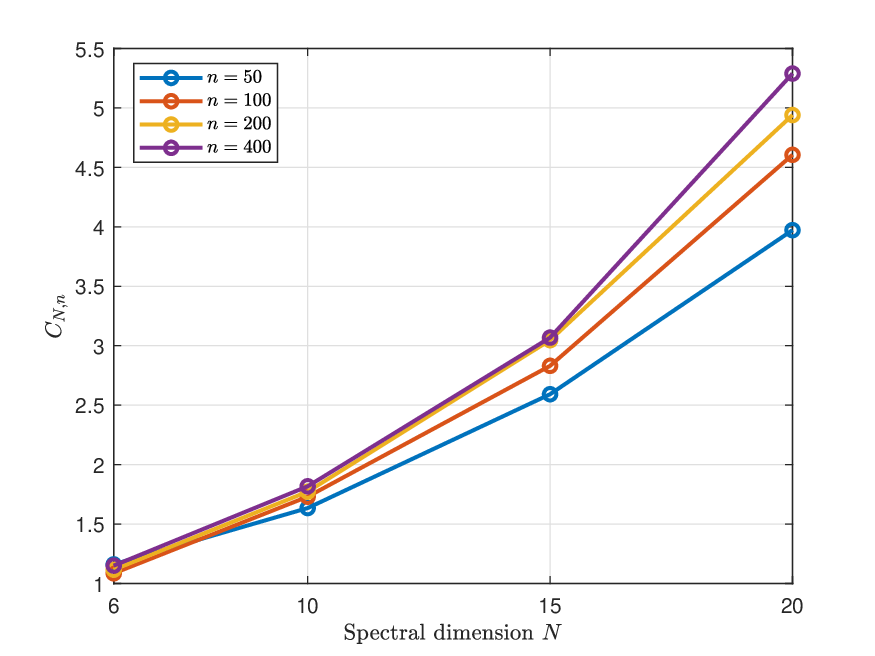}
\includegraphics[width=0.48\textwidth]{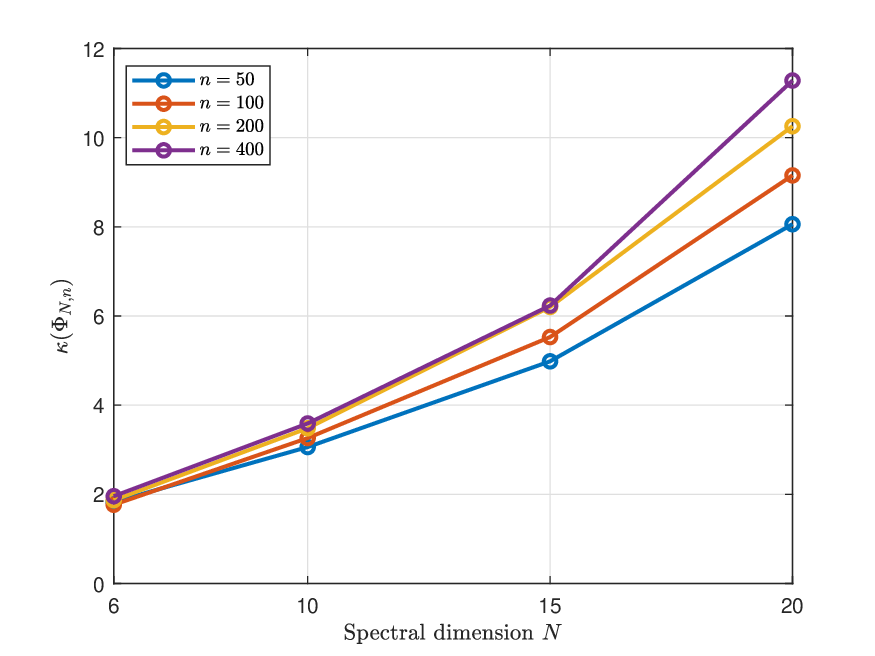}
\caption{$C_{N,n}$ and $\kappa(\Phi_{N,n})$ versus spectral dimension $N$,
with separate curves for $n=50,100,200,400$. Both vertical axes are logarithmic.}
\label{fig:experiment-A-constants}
\end{figure}

\begin{figure}[!htb]
\centering
\includegraphics[width=0.48\textwidth]{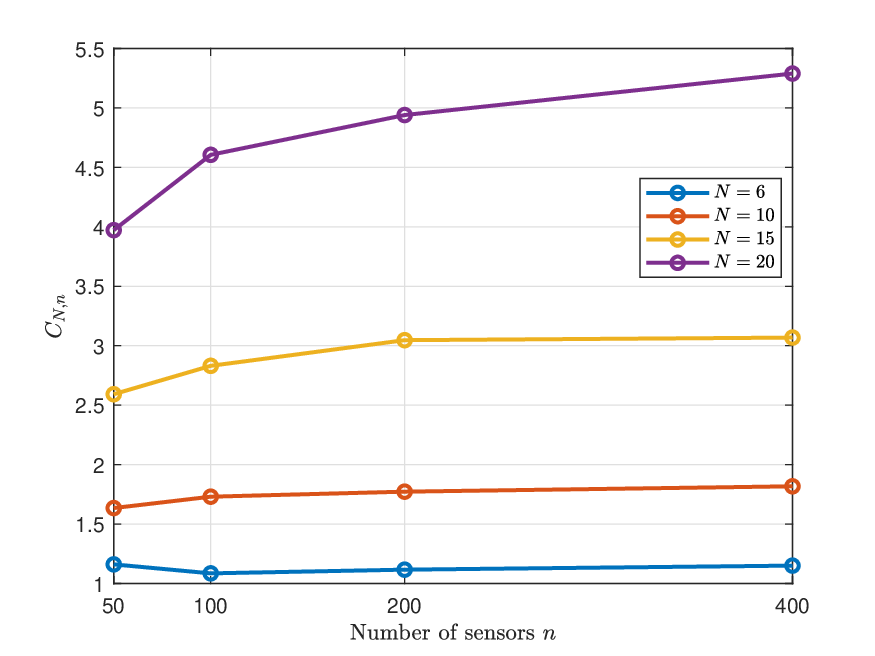}
\includegraphics[width=0.48\textwidth]{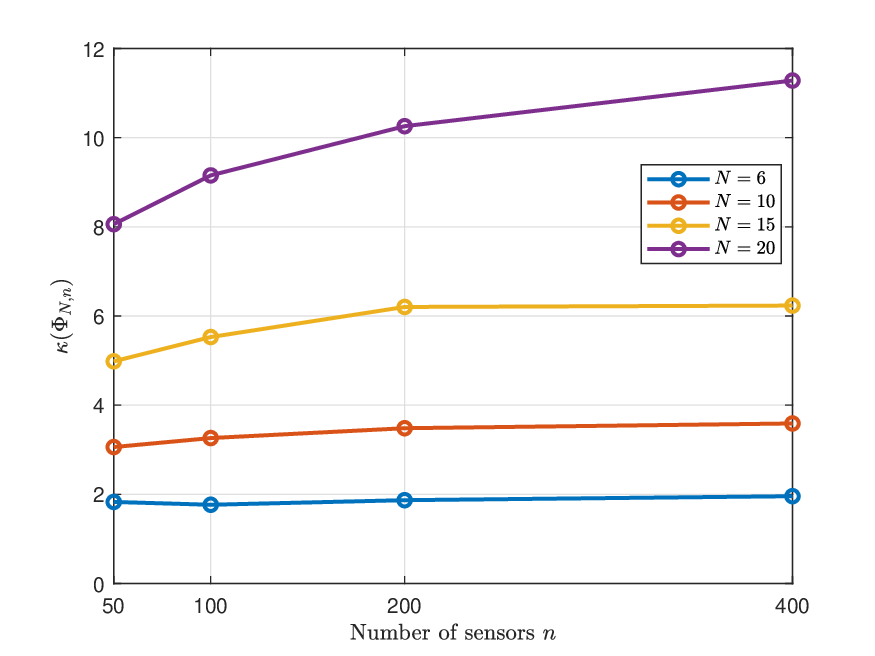}
\caption{$C_{N,n}$ and $\kappa(\Phi_{N,n})$ versus sensor count $n$,
with separate curves for $N=6,10,15,20$. Both vertical axes are logarithmic.}
\label{fig:experiment-A-vs-n}
\end{figure}

The computed values are reported in Table~\ref{tab:experiment-A}.
Figures~\ref{fig:experiment-A-constants} and
\ref{fig:experiment-A-vs-n} show their dependence on the spectral
dimension and sensor count, respectively. Both $C_{N,n}$ and
$\kappa(\Phi_{N,n})$ increase substantially with the spectral
dimension $N$. For example, at $n=400$, $C_{N,n}$ increases from
$1.1503$ for $N=6$ to $5.2893$ for $N=20$, while
$\kappa(\Phi_{N,n})$ increases from $1.9581$ to $11.2812$.
For fixed $N$, the dependence on the number of sensors is less
pronounced. The condition numbers generally increase with $n$ in
the tested designs, whereas $C_{6,n}$ varies only slightly and
nonmonotonically. Thus, increasing the number of sensors does not
necessarily improve these normalized finite-dimensional diagnostics;
the observed behavior also depends on the sampling geometry and
spectral dimension.

\subsubsection*{Experiment B: Waiting-time and noise effects}

We use $n=400$ sampling points selected from the finite candidate
approximation of $E_3$. The smooth source is reconstructed
in the first $10$ modes, while the nonsmooth source is reconstructed
in the first $15$ modes. For each source, synthetic observations are
computed from the source using the forward spectral approximation
described above. We consider source-free waiting times
$\tau_0\in\{0.01,0.02,0.04\}$ with $T_0=0.5$ fixed.

For each source and waiting time, we consider relative noise levels
$\varepsilon\in\{0,0.01,0.05,0.10\}$ and generate noisy observations
according to
\[
m_n^\delta
=m_n^{\mathrm{clean}}
+\varepsilon\|m_n^{\mathrm{clean}}\|_n\xi_n,
\]
where $m_n^{\mathrm{clean}}$ is the clean observation vector and
the entries of $\xi_n$ are independent standard Gaussian random
variables. For each fixed source, we reuse the same $50$ independent
Gaussian noise vectors across the positive noise levels and waiting
times, scaling each vector according to the formula above.
The $0\%$ case provides the noiseless baseline.
We report the median over the $50$ realizations of the relative
reconstruction error
\[
R_e=\frac{\|f_{\mathrm{rec}}-f^\dagger\|_{L^2(\Omega)}}
{\|f^\dagger\|_{L^2(\Omega)}},
\]
where $f^\dagger$ denotes the exact source and $f_{\mathrm{rec}}$
its reconstruction.

\begin{table}[!htb]
\centering
\caption{Median relative $L^2(\Omega)$ reconstruction errors
for the smooth and nonsmooth sources, based on $50$ Gaussian noise realizations.}
\label{tab:experiment-B}
\setlength{\tabcolsep}{6pt}
\small
\begin{tabular}{c*{8}{c}}
\toprule
& \multicolumn{4}{c}{Smooth source}
& \multicolumn{4}{c}{Nonsmooth source}\\
\cmidrule(lr){2-5}\cmidrule(lr){6-9}
$\tau_0$
& $0\%$ & $1\%$ & $5\%$ & $10\%$
& $0\%$ & $1\%$ & $5\%$ & $10\%$\\
\midrule
$0.01$
& {0.87\%}
& {1.53\%}
& {3.44\%}
& {4.74\%}
& {10.79\%}
& {16.94\%}
& {17.18\%}
& {17.25\%}\\

$0.02$
& {0.87\%}
& {1.99\%}
& {4.23\%}
& {4.76\%}
& {14.26\%}
& {17.07\%}
& {17.17\%}
& {17.23\%}\\

$0.04$
& {0.87\%}
& {3.82\%}
& {4.11\%}
& {4.28\%}
& {16.39\%}
& {17.10\%}
& {17.19\%}
& {17.26\%}\\
\bottomrule
\end{tabular}
\end{table}

\begin{figure}[!htb]
\centering
\subfigure[Exact source]{\includegraphics[width=0.48\textwidth]{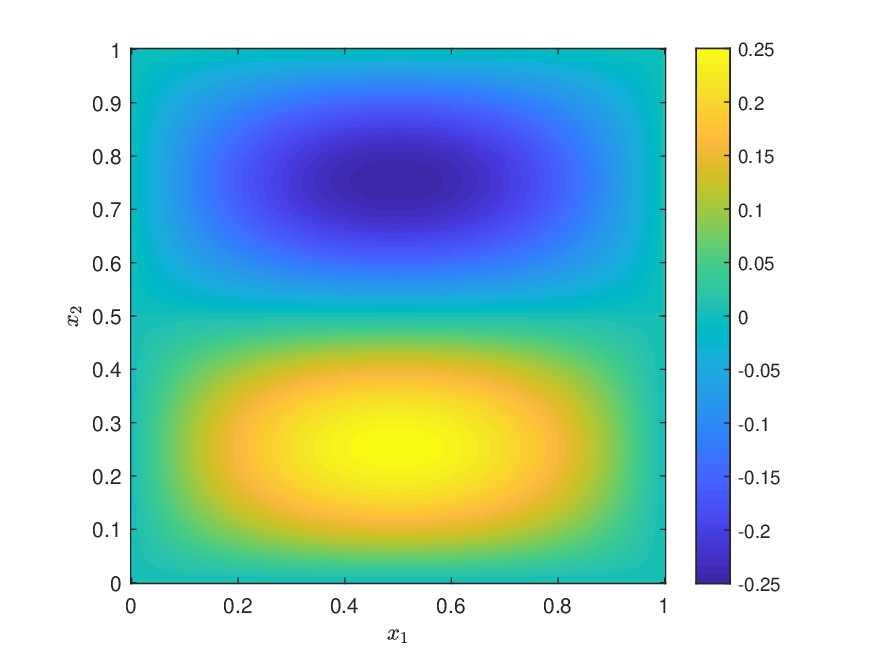}}
\subfigure[Reconstructed source]{\includegraphics[width=0.48\textwidth]{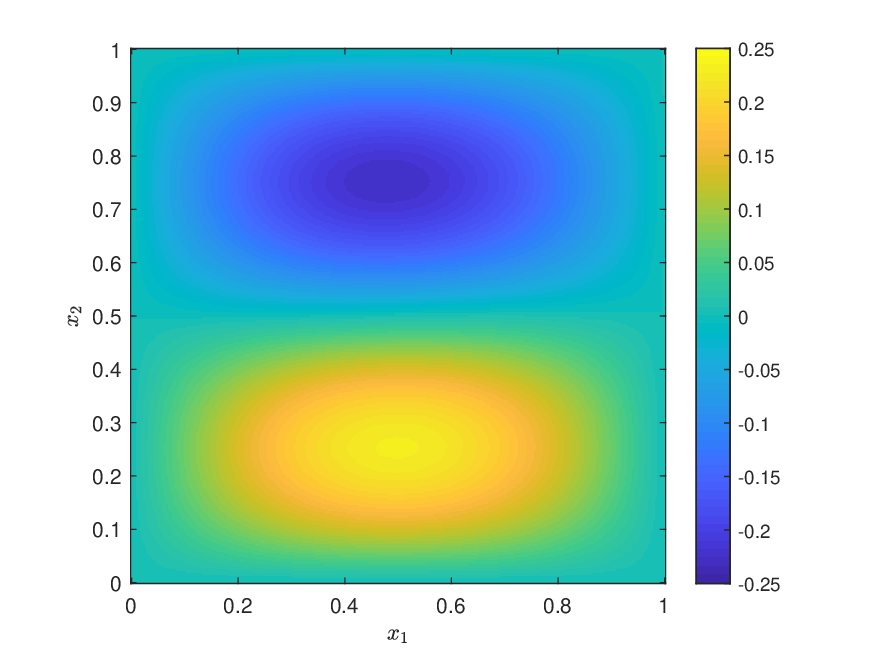}}
\caption{Reconstruction results for the smooth source at
$\tau_0=0.04$, $n=400$, and a $10\%$ relative noise level.}
\label{fig:experiment-B-smooth-tau004-noise10}
\end{figure}

\begin{figure}[!htb]
\centering
\subfigure[Exact source]{\includegraphics[width=0.48\textwidth]{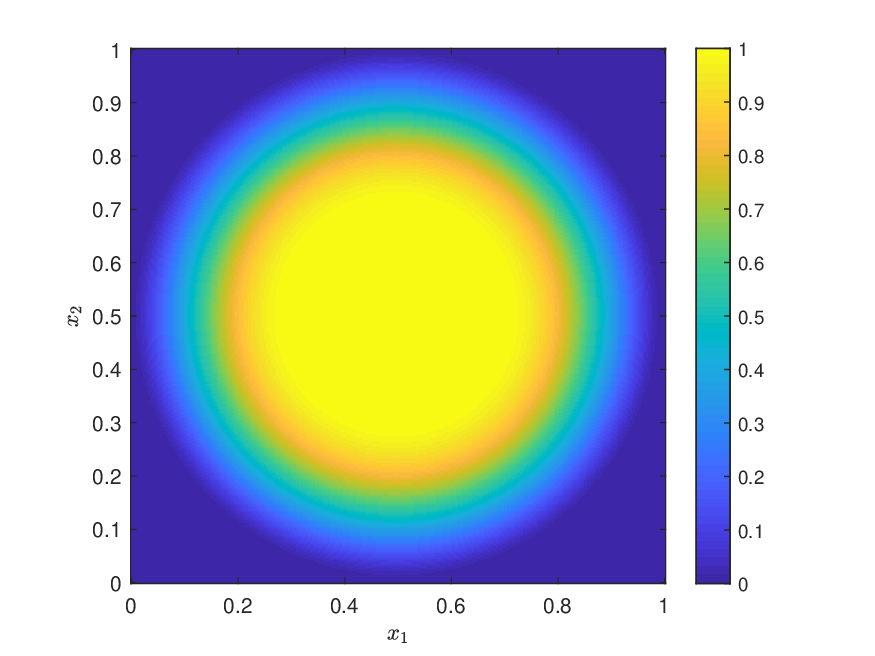}}
\subfigure[Reconstructed source]{\includegraphics[width=0.48\textwidth]{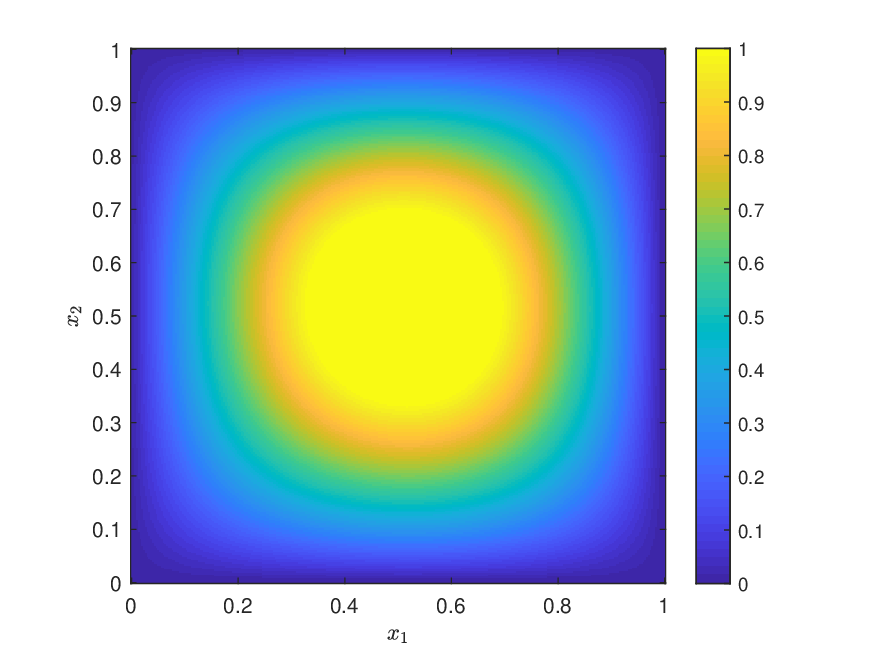}}
\caption{Reconstruction results for the nonsmooth source at
$\tau_0=0.04$, $n=400$, and a $10\%$ relative noise level.}
\label{fig:experiment-B-nonsmooth-tau004-noise10}
\end{figure}

The median reconstruction errors are summarized in
Table~\ref{tab:experiment-B}. Representative reconstructions at
$\tau_0=0.04$ and a $10\%$ relative noise level are shown in
Figures~\ref{fig:experiment-B-smooth-tau004-noise10} and
\ref{fig:experiment-B-nonsmooth-tau004-noise10} for the smooth and
nonsmooth sources, respectively. For the smooth source, reconstructed in the first $10$ modes, the
noiseless reconstruction error is approximately $0.87\%$ for all
three waiting times. At each fixed waiting time, the error increases
with the relative noise level. For example, at $\tau_0=0.01$, it
rises from $1.53\%$ at $1\%$ noise to $4.74\%$ at $10\%$ noise.
At $\tau_0=0.04$, the corresponding errors are $3.82\%$, $4.11\%$,
and $4.28\%$. The dependence on waiting time is not uniformly
monotone: at the $1\%$ noise level, the error increases from
$1.53\%$ to $3.82\%$ as $\tau_0$ increases, whereas at the $5\%$
and $10\%$ levels, the errors vary only slightly between
$\tau_0=0.02$ and $\tau_0=0.04$. Since the noise is scaled relative
to the clean-observation RMS at each waiting time, its absolute
amplitude decreases as the terminal signal weakens. The results
therefore reflect the combined effects of waiting time, noise, and
data-driven regularization rather than a uniformly increasing error
with waiting time.

For the nonsmooth source, reconstructed in the first $15$ modes,
the noiseless errors are $10.79\%$, $14.26\%$, and $16.39\%$ for $\tau_0=0.01$, $0.02$, and $0.04$, respectively. With positive noise,
the errors remain close to $17\%$ across all tested configurations.
At $\tau_0=0.01$ and $0.02$, introducing $1\%$ noise increases the
error substantially relative to the noiseless baseline, while further
increasing the noise level to $10\%$ produces only small changes.
At $\tau_0=0.04$, the error increases more moderately, from $16.39\%$ without noise to $17.10\%$ at $1\%$ noise, and then
remains near $17\%$. This behavior suggests that, for the nonsmooth
source, spectral truncation and regularization bias contribute
substantially to the reconstruction error, alongside the effect of
measurement noise.

\section{Concluding remarks}
\label{sec:conclusion}

The results of this work highlight the interplay between observation geometry and the temporal structure of the forcing in the recovery of a spatial source from terminal heat data. The positive $\beta$ dimensional Hausdorff content with $\beta>d-1$ provides the spectral observability needed to control low frequency components, while the waiting interval during which the forcing vanishes yields exponential decay of the terminal high frequency remainder. The comparison between the spectral observation cost $e^{C\sqrt{\Lambda}}$ and the waiting time decay $e^{-\tau_0\Lambda}$ is central to the conditional logarithmic stability estimate. The noncancellation condition on the terminal multipliers addresses a different difficulty: when the temporal profile changes sign, contributions from different parts of the forcing history may cancel at individual eigenfrequencies, compromising identifiability.

The discrete analysis connects this stability mechanism with reconstruction from scattered measurements. Under the stated quasi-uniform sampling and independent noise assumptions, the sampling inequality enables the continuous stability mechanism to be used in a finite dimensional setting, leading to a logarithmic convergence rate in probability for the data driven Tikhonov reconstruction. The numerical experiments illustrate how the waiting time, measurement noise, and source regularity affect reconstruction on a finite candidate approximation of a fixed Hausdorff set. These experiments complement the theoretical analysis by illustrating the behavior of the reconstruction method in representative settings.

Future work will focus on adaptive spectral cutoff selection and
the robustness of the data-driven parameter rule under more general
noise models. Another direction is to extend the scattered-point
sampling framework developed here to other inverse problems for partial differential equations, including the identification of unknown coefficients.

\section*{Acknowledgments}

This work is supported by the National Key Research and Development Program of China (Grant No.~2023YFB3002400). The second author also acknowledges support from the Ningbo Youth Leading Talent Project (No.~2024QL045), the National Natural Science Foundation of China (No.~12271277), and the Open Research Fund of the Key Laboratory of Nonlinear Analysis \& Applications (Central China Normal University), Ministry of Education, China.


\begin{thebibliography}{99}

\bibitem{BurqMoyano23}
N.~Burq and I.~Moyano,
Propagation of smallness and control for heat equations,
\emph{J. Eur. Math. Soc.}, 25 (2023), 1349--1377.

\bibitem{Cannon68}
J.~R. Cannon,
Determination of an unknown heat source from overspecified boundary data,
\emph{SIAM J. Numer. Anal.}, 5 (1968), 275--286.

\bibitem{CasagrandeDelSantoPrizzi22}
D.~Casagrande, D.~Del Santo, and M.~Prizzi,
Conditional stability up to the final time for
backward-parabolic equations with Log-Lipschitz coefficients,
\emph{J. Differential Equations}, 338 (2022),
518--550.

\bibitem{CenLiZhang25}
D.~Cen, Z.~Li, and W.~Zhang,
Scattered point measurement-based regularization for backward problems
for fractional wave equations,
\emph{J. Sci. Comput.}, 104 (2025), 58.

\bibitem{ChenZhangZou22}
Z.~Chen, W.~Zhang, and J.~Zou,
Stochastic convergence of regularized solutions and their finite
element approximations to inverse source problems,
\emph{SIAM J. Numer. Anal.}, 60 (2022), 751--780.

\bibitem{ChoulliYamamoto04}
M.~Choulli and M.~Yamamoto,
Conditional stability in determining a heat source,
\emph{J. Inverse Ill-Posed Probl.}, 12 (2004), 233--243.

\bibitem{Davies89}
E.~B. Davies, \emph{Heat Kernels and Spectral Theory}, Cambridge University Press, Cambridge, 1989.

\bibitem{EnglHankeNeubauer96}
H.~W.~Engl, M.~Hanke, and A.~Neubauer,
\emph{Regularization of Inverse Problems}, Kluwer, Dordrecht, 1996.

\bibitem{Falconer90}
K.~J. Falconer, \emph{Fractal Geometry: Mathematical Foundations and Applications}, Wiley, New York, 1990.

\bibitem{FuXiongQian07}
C.-L.~Fu, X.-T.~Xiong, and Z.~Qian,
Fourier regularization for a backward heat equation,
\emph{J. Math. Anal. Appl.}, 331 (2007),
472--480.

\bibitem{GilbargTrudinger01}
D.~Gilbarg and N.~S. Trudinger,
\emph{Elliptic Partial Differential Equations of Second Order}, 2nd ed., Springer, Berlin, 2001.

\bibitem{Green25}
A.~W. Green, K.~Le Balc'h, J.~Martin, and M.-A.~Orsoni,
On the dimension of observable sets for the heat equation,
\emph{Pure Appl. Anal.}, 7 (2025), 639--660.

\bibitem{Hansen10}
P.~C.~Hansen,
\emph{Discrete Inverse Problems: Insight and Algorithms},
SIAM, Philadelphia, 2010.

\bibitem{HuangWangWang26}
S.~Huang, G.~Wang, and M.~Wang,
Observability inequality, log-type Hausdorff content and heat equations,
\emph{Comm. Math. Phys.}, 407 (2026),  33.

\bibitem{ImanuvilovYamamoto98}
O.~Y. Imanuvilov and M.~Yamamoto,
Lipschitz stability in inverse parabolic problems by the Carleman estimate,
\emph{Inverse Problems}, 14 (1998), 1229--1245.

\bibitem{Isakov91}
V. Isakov,
Inverse parabolic problems with the final overdetermination,
\emph{Comm. Pure Appl. Math.}, 44 (1991), 185--209.

\bibitem{JaradatEtAl17}
A. Jaradat, F. Awawdeh, and M.~S.~M. Noorani,
Identification of time-dependent source terms and control parameters
in parabolic equations from overspecified boundary data,
\emph{J. Comput. Appl. Math.}, 313 (2017), 397--409.

\bibitem{KriegPozharskaUllrich26}
D.~Krieg, K.~Pozharska, M.~Ullrich, and T.~Ullrich,
Sampling projections in the uniform norm,
\emph{J. Math. Anal. Appl.}, 553 (2026),  129873.

\bibitem{Li06}
G. Li,
Data compatibility and conditional stability for an inverse source
problem in the heat equation,
\emph{Appl. Math. Comput.}, 173 (2006), 566--581.

\bibitem{LiYamamotoZou09}
J.~Li, M.~Yamamoto, and J.~Zou,
Conditional stability and numerical reconstruction of initial temperature,
\emph{Commun. Pure Appl. Anal.}, 8 (2009), 361--382.

\bibitem{LiLiZhou26}
Z.~Li, Z.~Li, and Z.~Zhou,
Inverse heat source problems from boundary flux and interior observations on sets of low Hausdorff dimension,
arXiv:2609.10071, 2026.

\bibitem{LimonovaMalykhinTemlyakov24}
I.~V.~Limonova, Yu.~V.~Malykhin, and V.~N.~Temlyakov,
One-sided discretization inequalities and sampling recovery,
\emph{Russian Math. Surveys}, 79 (2024), 515--545.

\bibitem{Liu02}
J.~Liu,
Numerical solution of forward and backward problem for
2-D heat conduction equation,
\emph{J. Comput. Appl. Math.}, 145 (2002),
459--482.

\bibitem{LogunovMalinnikova18}
A.~Logunov and E.~Malinnikova,
Quantitative propagation of smallness for solutions of elliptic equations,
in \emph{Proceedings of the International Congress of Mathematicians--Rio de Janeiro 2018}, Vol.~III, World Scientific, 2018, 2391--2411.

\bibitem{Malinnikova04}
E.~Malinnikova,
Propagation of smallness for solutions of generalized Cauchy--Riemann systems,
\emph{Proc. Edinb. Math. Soc.}, 47 (2004), 191--204.

\bibitem{Mattila95}
P.~Mattila, \emph{Geometry of Sets and Measures in Euclidean Spaces}, Cambridge University Press, Cambridge, 1995.

\bibitem{OrtegaPridhnani12}
J.~Ortega-Cerd\`a and B.~Pridhnani,
Beurling--Landau's density on compact manifolds,
\emph{J. Funct. Anal.}, 263 (2012), 2102--2140.

\bibitem{Ouhabaz05}
E.~M. Ouhabaz, \emph{Analysis of Heat Equations on Domains}, Princeton University Press, Princeton, 2005.

\bibitem{Pazy83}
A.~Pazy, \emph{Semigroups of Linear Operators and Applications to Partial Differential Equations}, Springer, New York, 1983.

\bibitem{PrilepkoTikhonov95}
A.~I. Prilepko and I.~V. Tikhonov,
Reconstruction of the nonhomogeneous term in an abstract evolution equation,
\emph{Izv. Math.}, 44 (1995), no.~2, 373--394.


\bibitem{Rao07}
K.~S. Rao,
Source estimation methods for atmospheric dispersion,
\emph{Atmospheric Environment}, 41 (2007), no.~33, 6964--6973.

\bibitem{SommarivaVianello09}
A.~Sommariva and M.~Vianello,
Computing approximate Fekete points by QR factorizations of Vandermonde matrices,
\emph{Comput. Math. Appl.}, 57 (2009), 1324--1336.

\bibitem{SunLiuLi17}
C.~Sun, Q.~Liu, and G.~Li,
Conditional well-posedness for an inverse source problem in the
diffusion equation using the variational adjoint method,
\emph{Adv. Math. Phys.}, 2017 (2017), 6801260.


\bibitem{SunZhangZhang25}
C.~Sun, W.~Zhang, and Z.~Zhang,
Quantitative estimates for a nonlinear inverse source problem in a
coupled diffusion equations with uncertain measurements,
arXiv:2504.19421, 2025.


\bibitem{SunZhang22}
C.~Sun and Z.~Zhang,
Uniqueness and numerical inversion in the time-domain fluorescence
diffuse optical tomography,
\emph{Inverse Problems}, 38 (2022), 104001.


\bibitem{WangAtmospheric21}
J. Wang, B. Wang, J. Liu, W. Cheng, and J. Zhang,
An inverse method to estimate the source term of atmospheric pollutant releases,
\emph{Atmospheric Environment}, 260 (2021), 118554.

\bibitem{WangZhangZhang231}
Z.~Wang, W.~Zhang, and Z.~Zhang,
A data-driven model reduction method for parabolic inverse source
problems and its convergence analysis,
\emph{J. Comput. Phys.}, 487 (2023), 112156.

\bibitem{WangZhangZhang23}
Z.~Wang, W.~Zhang, and Z.~Zhang,
Stochastic convergence of regularized solutions for backward heat conduction problems,
arXiv:2311.03623, 2023.

\bibitem{Yamamoto93}
M.~Yamamoto,
Conditional stability in determination of force terms of heat equations in a rectangle,
\emph{Math. Comput. Modelling}, 18 (1993), 79--88.

\bibitem{Yamamoto09}
M.~Yamamoto,
Carleman estimates for parabolic equations and applications,
\emph{Inverse Problems}, 25 (2009), 123013.

\bibitem{YuLi26}
K.~Yu and Z.~Li,
Logarithmic stability for recovering initial data of fractional heat equations from thin-set observations,
arXiv:2609.17253, 2026.

\end{thebibliography}
\end{document}